\theoremstyle{plain}
\newtheorem{thm}{Theorem}
  \newtheorem{Definition}[thm]{Definition}
  \newtheorem{Remark}[thm]{Remark}
  \theoremstyle{plain}
  \theoremstyle{plain}
  \newtheorem{Lemma}[thm]{Lemma}
  \theoremstyle{plain}
\numberwithin{equation}{section}
\begin{document}
\title{Weak solutions to 
Allen--Cahn--like equations modelling consolidation of porous media}
\author{{\sc Pietro Artale Harris}\\[2pt]
Dipartimento di Scienze di Base ed Applicate Per l'Ingegneria, Universit\`{a} di Roma La Sapienza, via A. Scarpa, 16, Roma, I.\\[6pt]
{\sc Emilio N. M. Cirillo}\\[2pt]
Dipartimento di Scienze di Base ed Applicate Per l'Ingegneria, Universit\`{a} di Roma La Sapienza, via A. Scarpa, 16, Roma, I.\\[6pt]
{\sc Adrian Muntean}\\[2pt]
Department of Mathematics and Computer Science, CASA -- Center for Analysis, Scientific computing and Applications, 
Institute for Complex Molecular Systems (ICMS), \\[6pt]
 Technische Universiteit Eindhoven,  P.O. Box 513,  5600 MB Eindhoven\\[6pt]
}
\pagestyle{headings}
\markboth{P. ARTALE HARRIS, E.M.N CIRILLO, A. MUNTEAN}{\rm Weak solutions to 
Allen--Cahn--like equations modelling consolidation of porous media}
\maketitle


\begin{abstract}
{We study the weak solvability of a system of  coupled Allen--Cahn--like equations resembling cross--diffusion which is arising as a model for the consolidation of saturated porous media. Besides using energy like estimates, we cast the special structure of the system in the framework of the Leray--Schauder fixed point principle and ensure this way the local existence of strong solutions to a regularised version of our system. Furthermore, weak convergence techniques ensure the existence of weak solutions to the original consolidation problem. The uniqueness of global-in-time solutions is guaranteed in a particular case. Moreover, we use a finite difference scheme to show the negativity of the vector of solutions.}{Weak solutions; cross--diffusion system;  energy method; Leray--Schauder fixed point theorem; finite differences; consolidation of porous media}
\end{abstract}

\section{Introduction}
\label{s:intro}
Porous solids with fluids moving inside  are very important to numerous engineering applications including the classical soil compaction and consolidation problem in civil engineering and poromechanics, or the biomechanics of bones and tissues, consolidation and subsidence control in environmental engineering, seepage of polluted liquids leaking from dangerous reservoirs, oil extraction plants and geothermal reservoirs; see for instance chapter 6 in \cite{Bear} for basic theoretical accounts and \cite{Higdon,Benalla}, \cite{CIS2011} and references 
cited therein for more modern applications. 

A typical unwanted phenomenon in the consolidation context is the 
occurrence of phase separation between fluid--rich and fluid--poor 
regions in porous media. 
Indeed, in such a case the porous medium, even in presence of an 
external pressure, could possibly have in its interior dangerous fluid 
bubbles \cite{CIS2010}. 

In this paper, we study a time--dependent Allen--Cahn--like system modelling the evolution of the macroscopic strain and fluid density in a porous media which is able to produce steady states exhibiting a strong phase separation between  
fluid--rich and fluid--poor regions; for details see \cite{CIS2010,CIS2011,CIS2013}. 
The system we are studying (referred to as problem $(P)$ in Section \ref{Strong}) has two mathematically challenging components: (i) a coupled flux (a linear combination of strain and fluid density gradients) resembling this way with 
cross--diffusion problems (see \cite{Vanag}, e.g.) or with thermo--diffusion problems (see \cite{NHM}, e.g.); (ii) the polynomial structure of the production term.

Trusting the working techniques from 
\cite{zamm1}, we apply a variant of the Leray--Schauder fixed point theorem to prove the existence of strong solutions to a regularized consolidation problem (see Section \ref{regularized}) and then employ weak convergence methods for this auxiliary problem to obtain  in the limit of the vanishing regularisation parameter local--in--time weak solutions of the original consolidation problem.   Under some additional restrictions on the model parameters, we show that the weak solutions exist globally in time and are negative. We conclude the paper with numerical illustrations of the solution to our problem and point out their non--uniqueness at stationarity for critical parameter regimes. We also briefly discuss a few mathematical aspects still open in this context. 


\section{Problem and results} 
\label{s:pr}
In this Section, we introduce the problem we are interested in and 
state our main results. 
In Section~\ref{s:cons} we shall discuss the our main physical motivations
coming from the porous media physics.

\subsection{Strong formulation of the problem}\label{Strong}
If $\varepsilon$ denotes the strain and $m$ the fluid density of our porous media (say $\Omega$) during a given observation time interval (say $S$), then  
the strong formulation of the problem we are going to study reads as follows:
\begin{align}
&\frac{\partial\varepsilon}{\partial t}+\mbox{div}(-k_1\nabla\varepsilon-k_2\nabla m)=\hat{f}_1(m,\varepsilon)&\mbox{in}\,\Omega\times S,
\label{problemadipartenzaconepsilonpunto}
\\
&\frac{\partial m}{\partial t}+\mbox{div}(-k_2\nabla\varepsilon-k_3\nabla m)=\hat{f}_2(m,\varepsilon)&\mbox{in}\,\Omega\times S,
\label{2problemadipartenzaconepsilonpunto}
\\
&\varepsilon(x,0)=\varepsilon_0(x)&\mbox{in}\,\Omega,
\label{3problemadipartenzaconepsilonpunto}
\\
&m(x,0)=m_0(x)&\mbox{in}\,\Omega, 
\label{4problemadipartenzaconepsilonpunto}
\\
&\varepsilon(l_1,t)=\varepsilon_D(t)&\mbox{in}\,S,
\label{5problemadipartenzaconepsilonpunto}
\\
& m(l_1,t)=m_D(t)&\mbox{in}\,S,
\label{6problemadipartenzaconepsilonpunto}
\\
&\frac{\partial \varepsilon}{\partial x}(l_2,t)=0&\mbox{in}\,S,
\label{7problemadipartenzaconepsilonpunto}
\\
&\frac{\partial m}{\partial x}(l_2,t)=0&\mbox{in}\,S.
\label{8problemadipartenzaconepsilonpunto}
\end{align}
We refer to 
\eqref{problemadipartenzaconepsilonpunto}--\eqref{5problemadipartenzaconepsilonpunto} as problem $(P)$.

This paper targets at the weak solvability of problem $(P)$. 
Before stating our main results, we collect the assumptions imposed on the data and parameters involved in the model equations. 
\begin{itemize}
\item[$H_1$:] The boundary functions $\varepsilon_D(t),\,m_D(t)$ are negative continuous for all $t\in S$ with $|\partial_t\varepsilon_D|,\,|\partial_tm_D|\leq C$ for a positive constant $C$.
\item[$H_2$:] $\varepsilon_0,\,m_0\in C(\overline{\Omega})$ with $\varepsilon_0\leq 0,\,m_0\leq 0$.
\item[$H_3$:] Let $M_1,\,M_2\in\mathbb{R}$ sufficiently large. We take 
\begin{eqnarray}
\hat{f}_1(r,s):=\begin{cases}
f_1(r,s),\,\hfill  \mbox{if}\;|r|\leq M_1\, \mbox{and}\, |s|\leq M_2 \\
0,\hfill\mbox{otherwise},
\end{cases}
\label{f1cappello}
\end{eqnarray}
\begin{eqnarray}
\hat{f}_2(r,s):=\begin{cases}
f_2(r,s),\,\hfill \mbox{if}\;|r|\leq M_1\, \mbox{and}\, |s|\leq M_2 \\
0,\hfill\mbox{otherwise},
\end{cases}
\label{f2cappello}
\end{eqnarray}
where  $f_1,\,f_2:\mathbb{R}\times\mathbb{R}\rightarrow\mathbb{R}$.
For the setting of interest, we consider  $f_1,\,f_2$ defined by
\begin{eqnarray}
f_i=\sum_{k_1=0}^{n^{(i)}_1}\sum_{k_2=0}^{n^{(i)}_2}A_{k_1k_2}^{(i)}\varepsilon^{k_1}m^{k_2},\qquad A_{k_1k_2}^{(i)}\in\mathbb{R},\,n_j^{(i)}\in\mathbb{N},k_i\in\{0,\dots,n_j^{(i)}\},\;
i,j=1,2,
\label{f1f2}
\end{eqnarray}
namely we take $f_1,\,f_2$ to be generic polynomials.


\item[$H_4$:] $k_1,\,k_2,\,k_3$ and $\gamma$ are strictly positive constants.
\item[$H_5$:] $k_2<\mbox{min}\left\lbrace k_1,k_3\right\rbrace$.
\item[$H_6$:] $\varepsilon_0,\,m_0\in C^\nu(\overline{\Omega}),\;\nu>0$.
\end{itemize}
Assumptions $H_1,\,H_2$ reflect the properties of suitably rescaled and translated mechanical strain and fluid density; $H_3,\,H_4$ are made so that this scenario fits to the setting described in 
\cite{CIS2011}, while $H_5,\,H_6$ are technical assumptions.

\subsection{Notation}
\label{sec:notation}
For a function $g=g(x,t),\,\partial_x g\,(\mbox{or}\,\nabla g)$, $\partial_t g$ indicate the partial derivatives with respect to spatial variable $x$ and temporal variable $t$. Let $T,\,l_1\,,l_2>0$ be fixed values. 
Define  $\Omega:=(l_1,l_2)$, $S:=(0,T]$, and 
$L:=|\Omega|=l_2-l_1$. For $1\leq p\leq \infty$ we denote by $L^p(\Omega)$ the usual Lebesgue space equipped with the norm $\|\cdot\|_{L^p(\Omega)}$. For $1\leq p\leq \infty$ and $k$ a positive integer, let $W^{k,p}(\Omega)$ be the usual Sobolev space with the norm $\|\cdot\|_{W^{k,p}(\Omega)}$. We write $H^s(\Omega)$ and $\|\cdot\|_{H^s(\Omega)}$ instead of $W^{s,2}(\Omega)$ and $\|\cdot\|_{W^{s,2}(\Omega)}$. Let $\Gamma^D:=\left\lbrace \ l_1\ \right\rbrace$. We denote by $V$ the space $$V:=\left\lbrace\varphi\in H^1(\Omega),\,\varphi=0\,\mbox{ on }\,\Gamma^D \right\rbrace$$ and recall the equivalence 
\begin{eqnarray}
\|\cdot\|_{H^1(\Omega)}\sim\|\cdot\|_{V(\Omega)}.
\end{eqnarray}
For $p\in[1,\infty)$ we denote by $L^p(S;B)$ the usual Bochner space equipped with the norm
$$
\|\cdot\|_{L^p(S;B)},
$$
for any arbitrary Banach space $B$ equipped with  norm $\|\cdot\|_B$. 
\\\\
Let $B_0$, $B_1$ be two Banach spaces. Define the space
\begin{eqnarray}
W:=\left\lbrace v;\;v\in L^2(S;B_0),\; \partial_t v\in L^2(S;B_1)\right\rbrace,
\label{spazioW}
\end{eqnarray}
and take, as a particular case, $B_0=V,\,B_1=L^2(\Omega)$. By \cite{zeidler}, Proposition 23.23 $(ii)$ p. 422, we have that 
\begin{eqnarray}
W\hookrightarrow\hookrightarrow C(\bar{S};L^2(\Omega)).
\end{eqnarray}
The following compactness result due to \cite{aubin} will be useful in our context:
\begin{thm}[Aubin]\label{aubin}
Let $B_0$, $B$ $B_1$ be three Banach spaces where $B_0$, $B_1$ are reflexive. Suppose that $B_0$ is continuously imbedded into $B$, which is also continuously imbedded into $B_1$, and, morover, the imbedding from $B_0$ into $B$ is compact. Let $W$ be defined as in \eqref{spazioW}.
Then the imbedding from $W$ into $L^2(S;B)$ is compact.  
\end{thm}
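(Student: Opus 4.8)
The plan is to follow the classical Aubin--Lions--Simon argument, whose engine is an interpolation (Ehrling) inequality that converts the compact embedding $B_0 \hookrightarrow\hookrightarrow B$ into quantitative control. \emph{Step 1 (Ehrling's inequality).} I would first prove that for every $\eta > 0$ there is $C_\eta > 0$ with
\[
\|w\|_{B} \le \eta \|w\|_{B_0} + C_\eta \|w\|_{B_1}\qquad\text{for all } w \in B_0.
\]
This is established by contradiction: if it fails there are $\eta_0>0$ and a sequence $w_n$ with $\|w_n\|_B = 1$, $\|w_n\|_{B_0} \le 1/\eta_0$ and $\|w_n\|_{B_1} \to 0$; the compact embedding $B_0 \hookrightarrow\hookrightarrow B$ yields a subsequence converging in $B$ to some $w$ with $\|w\|_B = 1$, while the continuous embedding $B \hookrightarrow B_1$ forces $w = 0$, a contradiction.

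\emph{Step 2 (weak extraction and reduction).} Let $(u_n)$ be bounded in $W$. Since $B_0$ and $B_1$ are reflexive, so are the Bochner spaces $L^2(S;B_0)$ and $L^2(S;B_1)$; hence, after passing to a subsequence, $u_n \rightharpoonup u$ in $L^2(S;B_0)$ and $\partial_t u_n \rightharpoonup \partial_t u$ in $L^2(S;B_1)$. Setting $v_n := u_n - u$ gives $v_n \rightharpoonup 0$ in $W$, and it suffices to show $v_n \to 0$ strongly in $L^2(S;B)$. Integrating the Ehrling inequality over $S$ yields
\[
\|v_n\|_{L^2(S;B)} \le \eta \|v_n\|_{L^2(S;B_0)} + C_\eta \|v_n\|_{L^2(S;B_1)} \le \eta\,C + C_\eta \|v_n\|_{L^2(S;B_1)},
\]
where the uniform $W$-bound has been used. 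Choosing $\eta$ arbitrarily small, the claim reduces to proving $v_n \to 0$ strongly in the weaker space $L^2(S;B_1)$.

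\emph{Step 3 (time compactness).} The derivative bound controls time-translates: for $h > 0$ one has $\|v_n(t+h) - v_n(t)\|_{B_1} = \|\int_t^{t+h}\partial_t v_n\,d\tau\|_{B_1} \le h^{1/2}\|\partial_t v_n\|_{L^2(S;B_1)}$, so that $\|v_n(\cdot+h) - v_n(\cdot)\|_{L^2(S;B_1)} \le C\,h^{1/2}$ uniformly in $n$. Combined with the relative compactness in $B$ of the time-averages $\int_{t_1}^{t_2} v_n\,d\tau$ (which are bounded in $B_0$ by H\"older's inequality, hence precompact through $B_0 \hookrightarrow\hookrightarrow B$), the Fr\'echet--Kolmogorov compactness criterion in Bochner spaces yields a subsequence converging strongly in $L^2(S;B_1)$; the weak limit $0$ then identifies the strong limit as $0$, and a standard subsequence argument upgrades this to convergence of the full sequence.

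\emph{Main obstacle.} The crux is Step 3. The pointwise values $v_n(t)$ need not be bounded in $B_0$ — only their $L^2$-in-time norm is — so the compact embedding cannot be applied at a fixed time. Circumventing this by passing to the time-averages, which \emph{are} bounded in $B_0$, together with the uniform translate estimate supplied by the bound on $\partial_t v_n$, is precisely the Aubin--Lions--Simon mechanism and the only genuinely delicate point; Steps 1 and 2 are routine functional analysis.
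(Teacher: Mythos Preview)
Your sketch is the standard Aubin--Lions--Simon argument and is essentially correct: Ehrling's lemma, reduction to strong convergence in $L^2(S;B_1)$, and the translate estimate combined with precompactness of time averages via Simon's criterion are exactly the right ingredients. One small remark on Step~3: what you invoke as ``Fr\'echet--Kolmogorov in Bochner spaces'' is precisely Simon's compactness criterion, and it is worth noting that the precompactness of the time averages $\int_{t_1}^{t_2} v_n\,d\tau$ in $B$ (hence in $B_1$ via $B\hookrightarrow B_1$) is what replaces the usual pointwise precompactness hypothesis; your identification of this as the delicate point is accurate.

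As for comparison with the paper: the paper does \emph{not} prove this theorem. It is stated as a known result attributed to Aubin (reference~\cite{aubin}) and used as a black box, primarily to justify the compact embedding $V_2^{2,1}(\Omega\times S)\hookrightarrow\hookrightarrow L^2(S;H^1(\Omega))$ and, later, $W\hookrightarrow\hookrightarrow L^2(S;L^2(\Omega))$ in the passage to the limit $\delta\to 0$. So there is nothing to compare against; your proposal simply supplies what the paper omits by citation.
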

Define the space
\[
V_2^{2,1}(\Omega\times S):=\left\lbrace\varphi\in L^2(\Omega\times S),\; \varphi_t,\varphi_x,\varphi_{xx}\in L^2(\Omega\times S)\right\rbrace,
\]
then the following imbedding is a consequence of Theorem \ref{aubin}:
\begin{eqnarray}
V_2^{2,1}(\Omega\times S)\hookrightarrow\hookrightarrow L^2(S;H^1(\Omega)).
\label{imbedding1}
\end{eqnarray}

\subsection{Main results}
\label{sec:main}

\begin{Definition}\label{weaksol}
The couple 
$$(\varepsilon,m)\in \left[\varepsilon_D+L^2(S;V)\cap H^1(S;L^2(\Omega))\right]\times\left[m_D+ L^2(S;V)\cap H^1(S;L^2(\Omega))\right]$$
is called weak solution to problem $(P)$ if and only if the following identities
\begin{eqnarray}
(\partial_t\varepsilon,\varphi)_{L^2(\Omega)}+k_1(\nabla\varepsilon,\nabla\varphi)_{L^2(\Omega)}=-k_2(\nabla m,\nabla\varphi)_{L^2(\Omega)}+(\hat{f}_1,\varphi)_{L^2(\Omega)},
\end{eqnarray}
\begin{eqnarray}
(\partial_t m,\psi)_{L^2(\Omega)}+k_3(\nabla m),\nabla\psi)_{L^2(\Omega)}=-k_2(\nabla\varepsilon,\nabla\psi)_{L^2(\Omega)}+(\hat{f}_2,\psi)_{L^2(\Omega)}.
\end{eqnarray}
hold for all $(\varphi,\psi)\in V\times V$ and for all $t\in S$.
\end{Definition}
\begin{thm}[Existence]\label{main}
Under the assumptions $H_1$--$H_4$ 
there exist at least a weak solution to problem $(P)$ in the sense of Definition \ref{weaksol}.
\end{thm}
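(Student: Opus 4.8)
\emph{Strategy.} The plan is to follow the two-stage programme announced in the introduction. First I would introduce a regularised problem $(P_\gamma)$ (as in Section~\ref{regularized}), chosen so that its solution operator gains enough space-time regularity to be compact, and solve $(P_\gamma)$ by a variant of the Leray--Schauder fixed point theorem. Then I would derive a priori estimates that are uniform in the regularisation parameter $\gamma$ and pass to the limit $\gamma\to 0$, recovering a weak solution of $(P)$ in the sense of Definition~\ref{weaksol}. Two structural features drive the whole argument. First, by the truncation in $H_3$ the reaction terms $\hat f_1,\hat f_2$ are globally bounded: a polynomial restricted to the box $\{|r|\le M_1,\ |s|\le M_2\}$ is bounded, and $\hat f_i$ vanishes outside it, so $\|\hat f_i\|_{L^\infty(\mathbb{R}^2)}\le C(M_1,M_2)$. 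Second, the coupled flux is coercive because the diffusion matrix with entries $k_1,k_2,k_2,k_3$ is positive definite, i.e.\ $k_1k_3>k_2^2$, which reflects the smallness of $k_2$ relative to $k_1,k_3$.

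\emph{Fixed point.} To set up the fixed point I would first homogenise the Dirichlet data by writing $\varepsilon=\varepsilon_D+u$ and $m=m_D+w$ with $u,w$ having zero trace on $\Gamma^D$ (extending $\varepsilon_D,m_D$, which depend on $t$ only, constantly in $x$); by $H_1$ the extensions and their time derivatives are controlled. On a space such as $X:=L^2(S;L^2(\Omega))^2$ I would define the map $\mathcal{T}$ that sends a frozen pair $(\bar u,\bar w)$ to the solution $(u,w)$ of the \emph{linear} regularised system obtained by replacing the arguments of $\hat f_1,\hat f_2$ by $(\bar\varepsilon,\bar m)=(\varepsilon_D+\bar u,\ m_D+\bar w)$. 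Linear parabolic theory makes $\mathcal{T}$ well defined and, through the regularising term, maps $X$ into a space embedding compactly into $X$ by Theorem~\ref{aubin} (cf.\ the imbedding \eqref{imbedding1}); hence $\mathcal{T}$ is compact and continuous. The Leray--Schauder hypothesis then requires a uniform bound on all $(u,w)$ solving $(u,w)=\sigma\,\mathcal{T}(u,w)$, $\sigma\in[0,1]$, which is exactly the content of the energy estimate below.

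\emph{A priori estimates.} Testing the two regularised equations with $u$ and $w$ respectively and adding, the diffusion contributes the quadratic form $k_1\|\nabla u\|^2+2k_2(\nabla u,\nabla w)+k_3\|\nabla w\|^2$, which is bounded below by a positive multiple of $\|\nabla u\|^2+\|\nabla w\|^2$ precisely because $k_1k_3>k_2^2$. The reaction and boundary-shift terms are absorbed using the $L^\infty$ bound on $\hat f_i$, the bounds on $\partial_t\varepsilon_D,\partial_t m_D$ from $H_1$, Cauchy--Schwarz and Young's inequality. This yields $\tfrac{1}{2}\tfrac{d}{dt}\big(\|u\|^2+\|w\|^2\big)$ controlled by the $V$-norms times a small constant plus a constant, and Grönwall's inequality closes the estimate to give bounds for $(u,w)$ in $L^\infty(S;L^2(\Omega))\cap L^2(S;V)$ independent of $\sigma$ and of $\gamma$. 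Testing again and using the equations provides a bound on $(\partial_t u,\partial_t w)$ in $L^2(S;V')$ (or in $L^2(S;L^2)$ for the regularised problem), placing the solutions in the space $W$ of \eqref{spazioW}.

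\emph{Passage to the limit and main obstacle.} With the $\gamma$-uniform bounds in hand I would extract a subsequence converging weakly in $L^2(S;V)$, weakly-$\ast$ in $L^\infty(S;L^2)$ and, crucially, strongly in $L^2(S;L^2(\Omega))$ (and a.e.) by Aubin compactness, Theorem~\ref{aubin}. The linear flux terms pass to the limit by weak convergence of the gradients; the regularising terms vanish since they carry a factor $\gamma$ against uniformly bounded quantities. The main obstacle is the nonlinear reaction: here I would use that $\hat f_i$ are continuous and bounded, so strong $L^2$ (hence a.e.) convergence of $(\varepsilon,m)$ together with dominated convergence gives $\hat f_i(\bar m,\bar\varepsilon)\to\hat f_i(m,\varepsilon)$ in $L^2(S;L^2)$. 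Securing compactness strong enough to handle the polynomial nonlinearity, while keeping all estimates uniform in $\gamma$ and compatible with the cross-diffusion coupling, is the crux of the argument; the truncation $H_3$ and the positive definiteness of the diffusion matrix are exactly what make it work. Finally, the affine structure of the solution space enforces the Dirichlet data, and the embedding $W\hookrightarrow\hookrightarrow C(\bar S;L^2(\Omega))$ gives meaning to and recovers the initial conditions, completing the identification of the limit as a weak solution.
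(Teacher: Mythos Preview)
Your overall programme (regularise, apply Leray--Schauder, derive uniform a~priori estimates, pass to the limit) matches the paper's architecture, but there is a genuine gap in your execution. Your energy estimate rests on coercivity of the coupled flux via positive definiteness of the diffusion matrix, i.e.\ $k_1k_3>k_2^2$. This condition is \emph{not} part of $H_1$--$H_4$: hypothesis $H_4$ asserts only that $k_1,k_2,k_3>0$, with no relation among them (the smallness condition $H_5$, which together with $H_4$ would indeed imply $k_1k_3>k_2^2$, is reserved for the uniqueness theorem and is explicitly absent from the existence statement). Without that structural assumption your quadratic-form lower bound on $k_1\|\nabla u\|^2+2k_2(\nabla u,\nabla w)+k_3\|\nabla w\|^2$ fails, the energy inequality does not close, and the Leray--Schauder bound is unavailable.

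The paper sidesteps exactly this obstacle through its specific choice of regularisation, which you leave unspecified. In problem $(P^\delta)$ the cross-gradients $\nabla m$ and $\nabla\varepsilon$ in the coupling terms are replaced by \emph{mollified} gradients $\nabla^\delta m:=\nabla(J_\delta\!*\!m)$ and $\nabla^\delta\varepsilon$. For fixed $\delta$ one has $\|\nabla^\delta f\|_{L^p}\le c^\delta\|f\|_{L^2}$, so in the auxiliary problem (where $(\tilde\varepsilon,\tilde m)$ is frozen both in the reaction \emph{and} in the cross-diffusion) the term $k_2\,\mathrm{div}(\nabla^\delta\tilde m)$ is a bounded datum rather than a top-order coupling. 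The system therefore \emph{decouples} into two scalar quasi-linear parabolic equations, each solved by classical Lady\v{z}henskaya--Solonnikov--Ural'tseva theory, and the individual energy estimates need only $k_1>0$ and $k_3>0$ separately---no matrix condition. Your fixed-point map, by contrast, freezes only the reaction arguments and retains the full cross-diffusion, which is precisely what forces you to invoke matrix coercivity. The limit $\delta\to 0$ in the paper then uses $\nabla^\delta f\to\nabla f$ in $L^2$ together with Aubin compactness for strong $L^2$ convergence; your handling of the nonlinear reaction via boundedness of $\hat f_i$ and dominated convergence does coincide with the paper's.
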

\begin{thm}[Uniqueness]\label{uniqueness}
Assume $H_1$--$H_4$ and $H_5$ to hold. Then, for any fixed $T\in(0\,\infty)$, it exist at most a solution to $(P)$ in the sense of Definition \ref{weaksol}.
\end{thm}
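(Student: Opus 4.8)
The plan is to combine an energy estimate for the difference of two solutions with Gronwall's inequality. Suppose $(\varepsilon_1,m_1)$ and $(\varepsilon_2,m_2)$ are two weak solutions of $(P)$ in the sense of Definition \ref{weaksol} associated with the same data, and set $\bar\varepsilon:=\varepsilon_1-\varepsilon_2$ and $\bar m:=m_1-m_2$. Since both components share the same Dirichlet datum on $\Gamma^D$ and the same initial values, the differences satisfy $\bar\varepsilon,\bar m\in L^2(S;V)\cap H^1(S;L^2(\Omega))$ with $\bar\varepsilon(\cdot,0)=\bar m(\cdot,0)=0$; in particular $\bar\varepsilon(\cdot,t),\bar m(\cdot,t)\in V$ for a.e. $t$, so they are admissible test functions. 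Subtracting the two weak identities for $\varepsilon$ and the two for $m$ yields the evolution identities for $\bar\varepsilon$ and $\bar m$, whose right-hand sides carry the reaction differences $\hat f_1(m_1,\varepsilon_1)-\hat f_1(m_2,\varepsilon_2)$ and $\hat f_2(m_1,\varepsilon_1)-\hat f_2(m_2,\varepsilon_2)$.

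Next I would test the difference identity for $\bar\varepsilon$ with $\varphi=\bar\varepsilon$ and the one for $\bar m$ with $\psi=\bar m$. Thanks to the embedding $W\hookrightarrow C(\bar S;L^2(\Omega))$ recorded earlier, the pairings $(\partial_t\bar\varepsilon,\bar\varepsilon)_{L^2(\Omega)}$ and $(\partial_t\bar m,\bar m)_{L^2(\Omega)}$ equal $\tfrac12\tfrac{d}{dt}\|\bar\varepsilon\|_{L^2(\Omega)}^2$ and $\tfrac12\tfrac{d}{dt}\|\bar m\|_{L^2(\Omega)}^2$, respectively. Adding the two resulting equations gives
\begin{equation*}
\begin{split}
\frac12\frac{d}{dt}\left(\|\bar\varepsilon\|_{L^2(\Omega)}^2+\|\bar m\|_{L^2(\Omega)}^2\right)+Q(\nabla\bar\varepsilon,\nabla\bar m)
&=\left(\hat f_1(m_1,\varepsilon_1)-\hat f_1(m_2,\varepsilon_2),\bar\varepsilon\right)_{L^2(\Omega)}\\
&\quad+\left(\hat f_2(m_1,\varepsilon_1)-\hat f_2(m_2,\varepsilon_2),\bar m\right)_{L^2(\Omega)},
\end{split}
\end{equation*}
where $Q(\nabla\bar\varepsilon,\nabla\bar m):=k_1\|\nabla\bar\varepsilon\|_{L^2(\Omega)}^2+2k_2(\nabla\bar\varepsilon,\nabla\bar m)_{L^2(\Omega)}+k_3\|\nabla\bar m\|_{L^2(\Omega)}^2$ is the Dirichlet quadratic form of the diffusion matrix $\left(\begin{smallmatrix}k_1&k_2\\k_2&k_3\end{smallmatrix}\right)$. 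This is precisely where $H_5$ is needed: since $k_1,k_3>0$ and $k_2<\min\{k_1,k_3\}$ forces $k_2^2<k_1k_3$, the matrix is positive definite, hence $Q(\nabla\bar\varepsilon,\nabla\bar m)\ge 0$ and the entire diffusion contribution can simply be dropped from the left-hand side.

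It then remains to control the reaction terms. By $H_3$ the functions $\hat f_1,\hat f_2$ are globally bounded truncations of the polynomials $f_1,f_2$, which are Lipschitz on the compact box $\{|r|\le M_1,\,|s|\le M_2\}$ where $\hat f_i=f_i$; the Lipschitz constant $L$ is controlled by the maxima of $|\nabla f_i|$ over that box. Consequently $|\hat f_i(m_1,\varepsilon_1)-\hat f_i(m_2,\varepsilon_2)|\le L(|\bar\varepsilon|+|\bar m|)$ pointwise, and Cauchy--Schwarz together with Young's inequality bounds the right-hand side above by $C(\|\bar\varepsilon\|_{L^2(\Omega)}^2+\|\bar m\|_{L^2(\Omega)}^2)$ for a constant $C=C(L)$. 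Writing $y(t):=\|\bar\varepsilon(t)\|_{L^2(\Omega)}^2+\|\bar m(t)\|_{L^2(\Omega)}^2$, the estimate reduces to $y'(t)\le 2Cy(t)$ on $S$ with $y(0)=0$, so Gronwall's lemma forces $y\equiv 0$, i.e. $\varepsilon_1=\varepsilon_2$ and $m_1=m_2$ on $\Omega\times S$.

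The step I expect to be the real obstacle is the reaction-term estimate. The hard cutoff in $H_3$ is not literally Lipschitz across the boundary of the truncation box, since $\hat f_i$ jumps to zero there; to make the pointwise Lipschitz bound rigorous one must either read the truncation as a smooth one or invoke an a priori $L^\infty$ bound (available in one space dimension through $H^1(\Omega)\hookrightarrow C(\overline\Omega)$ and, on the constructed solutions, uniform in time) guaranteeing that both solutions stay strictly inside the box where $\hat f_i=f_i$ is smooth, and one must check that the resulting Lipschitz constant is uniform in $t$. By contrast, the diffusion part is dispatched cleanly by the positive definiteness provided by $H_5$, and the legitimacy of the energy identity is underpinned by the embedding $W\hookrightarrow C(\bar S;L^2(\Omega))$.
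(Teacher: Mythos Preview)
Your proof is correct and follows the same strategy as the paper: test the difference of two solutions with itself, exploit $H_5$ to make the gradient contribution nonnegative, control the reaction differences by a Lipschitz bound, and finish with Gronwall. The only variation is cosmetic: the paper handles the cross term $k_2(\nabla\bar\varepsilon,\nabla\bar m)$ via the arithmetic--geometric mean inequality in each equation separately, arriving at coefficients $(k_1-k_2)$ and $(k_3-k_2)$ which $H_5$ renders nonnegative, whereas you sum first and invoke positive definiteness of the full diffusion matrix (noting that $H_5$ forces $k_2^2<k_1k_3$). Your caveat about the hard cutoff in $\hat f_i$ is well placed---the paper simply asserts the Lipschitz inequalities without commenting on this point.
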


\begin{thm}[Boundedness and negativity of $\varepsilon$ and $m$]
Assume $H_1$--$H_4$ and $H_6$ to hold true together with 
$A_1$--$A_3$ (cf.\ Section \ref{sec:negativity}). Moreover, assume that the functions $\hat{f}_1,\,\hat{f}_2$ are negative. Then the solution $(\varepsilon,m)$ to Problem $(P)$ is bounded and negative.
\label{boundandnegative}
\end{thm}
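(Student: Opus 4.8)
The plan is to realise the strategy announced in the abstract: prove negativity and boundedness first for a consistent finite difference discretisation of $(P)$, and then transfer these properties to the weak solution by compactness. Since $\Omega=(l_1,l_2)$ is one dimensional, I would lay down a uniform spatial grid $x_j=l_1+jh$ ($j=0,\dots,N$, $h=L/N$) and time levels $t_n=n\tau$, replace each diffusion operator by the three point stencil, impose the Dirichlet datum \eqref{5problemadipartenzaconepsilonpunto}--\eqref{6problemadipartenzaconepsilonpunto} at $x_0=l_1$ and the homogeneous Neumann condition \eqref{7problemadipartenzaconepsilonpunto}--\eqref{8problemadipartenzaconepsilonpunto} at $x_N=l_2$ through a ghost node, and evaluate $\hat f_1,\hat f_2$ pointwise. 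Writing the unknown node vector as $U^n=(\varepsilon^n_j,m^n_j)_j$, a (semi-)implicit update takes the form $\mathcal A\,U^{n+1}=U^n+\tau\,\hat F$, with $\mathcal A$ assembled from the identity, the diagonal diffusion stencils carrying $k_1,k_3$ and the off diagonal coupling stencils carrying $k_2$.

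The core is a discrete comparison principle established by induction on $n$: starting from $U^0\le0$ (ensured by $H_2$), with $U^n\le0$ on $\Gamma^D$ at every level (by $H_1$) and $\hat F\le0$ by hypothesis, I want to conclude $U^{n+1}\le0$. Here lies the genuine difficulty. For a scalar parabolic equation this is immediate because the implicit matrix is an $M$-matrix; but the mixed flux couples the fields through the symmetric positive definite matrix $K$ with entries $k_{11}=k_1$, $k_{12}=k_{21}=k_2$, $k_{22}=k_3$, and the same-node coupling entry it produces in $\mathcal A$ has the wrong sign ($+2\tau k_2/h^2>0$), so $\mathcal A$ is not an $M$-matrix in the native variables and the negative quadrant is not an obvious invariant region. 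This is exactly where the additional hypotheses $A_1$--$A_3$ enter: they must be calibrated (through a CFL type bound on $\tau/h^2$ together with a smallness of the coupling in the spirit of $k_2<\min\{k_1,k_3\}$) so that the fully discrete update is nonetheless an order preserving map on the negative cone. Granting this, the induction yields $\varepsilon^n_j\le0$ and $m^n_j\le0$ at all nodes and all levels. Boundedness is obtained in tandem: since $H_3$ truncates $\hat f_1,\hat f_2$ outside $[-M_1,M_1]\times[-M_2,M_2]$, the discrete source is uniformly bounded, and a discrete maximum norm (or energy) estimate then controls $\|U^n\|_\infty$ independently of $h$ and $\tau$, supplying the lower bound that complements negativity.

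It remains to let $h,\tau\to0$. The uniform bounds, transported to the piecewise affine space--time interpolants, furnish the compactness needed to extract, via the embedding \eqref{imbedding1} and Aubin's Theorem \ref{aubin}, a subsequence converging strongly in $L^2(S;H^1(\Omega))$ and almost everywhere; the truncation in $H_3$ makes $\hat f_1,\hat f_2$ bounded and continuous, so dominated convergence carries the limit through the nonlinear sources, and the limit $(\varepsilon,m)$ satisfies Definition \ref{weaksol}. Negativity and the bounds survive almost everywhere convergence, hence hold for $(\varepsilon,m)$; invoking the uniqueness of Theorem \ref{uniqueness} (available once $H_5$ is in force) identifies this limit with the weak solution of Theorem \ref{main}, which proves the claim.

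The step I expect to be the main obstacle is precisely the discrete comparison principle of the second paragraph: the off diagonal diffusion $k_2$ obstructs both the $M$-matrix structure and the classical invariant region criterion for the quadrant $\{\varepsilon\le0,\,m\le0\}$, so the whole argument hinges on choosing $A_1$--$A_3$ strong enough to restore monotonicity of the update without trivialising the coupling. A subsidiary technicality is checking that the one sided discretisation of the Neumann condition at $x=l_2$ does not corrupt the sign structure of the last block row.
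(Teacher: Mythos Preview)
Your overall architecture matches the paper's: negativity is obtained at the discrete level for a finite difference scheme and then transferred to the continuous solution. However, the mechanism you conjecture for $A_1$--$A_3$ is not the one the paper uses, and your guess would not work. You expect these hypotheses to restore an $M$-matrix structure for the full coupled update via a smallness condition on $k_2$ in the spirit of $H_5$; as you yourself note, pushing this far enough to make the $2N\times 2N$ update monotone would essentially trivialise the coupling. The paper avoids this entirely. Assumptions $A_1$ and $A_2$ are CFL conditions ($2\tau_n\theta\le h$ and $2\tau_n(1-\theta)\le h^2$) that make only the \emph{scalar} block $\textbf{I}-\lambda_n\theta\textbf{H}$ of the $m$-equation irreducibly diagonally dominant. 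The cross term produced by $k_2$ is not absorbed into this matrix; it is isolated as $\textbf{B}_n\textbf{1}+\textbf{B}_{n-1}\textbf{1}$, whose $i$-th entry is proportional to $b_i^n-b_{i-1}^n+b_i^{n-1}-b_{i-1}^{n-1}$ with $b_i^n=(\varepsilon_i^n-\varepsilon_{i-1}^n)/h$, i.e.\ a discrete second difference of $\varepsilon$ summed over two consecutive time levels. The paper proves by a separate induction (using the scheme \eqref{scheme} itself) that this quantity remains $\le0$ for all $n$, and $A_3$ is precisely the base case of that induction --- a sign hypothesis on the initial discrete data, not a restriction on $k_2$. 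This auxiliary sign propagation is the missing idea in your proposal.

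Two smaller discrepancies. Boundedness in the paper is not extracted from the discrete scheme but directly from the extra regularity afforded by $H_6$: Lemma~\ref{convergences}\,(vii) yields $\varepsilon,m\in L^\infty(S;L^\infty(\Omega))$, which is the reason $H_6$ appears among the hypotheses. And your final identification step invokes Theorem~\ref{uniqueness}, which requires $H_5$ --- an assumption \emph{not} present in Theorem~\ref{boundandnegative}. The paper sidesteps this by simply asserting, with reference to \cite{saito}, that the discrete approximations converge to the solution $(\varepsilon,m)$ already constructed; your compactness-plus-uniqueness route is tidier, but as written it proves the theorem only under the additional hypothesis $H_5$.
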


\begin{Remark}
It is worth noting that Theorem \ref{main}, Theroem \ref{uniqueness} and Theorem \ref{boundandnegative} were obtained for the case of Dirchlet--Neumann boundary conditions. Note that with minimal modifications of the proofs, we can handle other kinds of physically relevant boundary conditions (e.g the periodic case or the Dirichlet--Dirichlet or the  Neumann--Neumann boundary conditions).
\end{Remark}

\subsection{Application to the consolidation of porous media}
\label{s:cons}
The problem $(P)$ introduced in Section~\ref{Strong}, as 
already announced in the introduction, has a relevant
application to the theory of Porous Media. In this section we give 
a very brief account of this theory and we refer the interested reader 
to the paper \cite{CIS2013} for a detailed derivation.

We introduce the one dimensional poromechanical model (see 
\cite{CIS2013})
whose geometrically linearized version is connected to problem $(P)$.
Kinematics will be briefly resumed starting from the
general statement of the model cf. \cite{Coussy}.
The equations governing 
the behavior of the porous system are then deduced prescribing 
the conservative part of
the constitutive law through a suitable potential energy density 
$\Phi$ and the dissipative 
contributions through purely Stokes term. 

Let $B_\textrm{s}:=[\ell_1,\ell_2]\subset\mathbb{R}$, with 
$\ell_1,\ell_2\in\mathbb{R}$, 
and $B_\textrm{f}:=\mathbb{R}$ be the \textit{reference}
configurations
for the solid and fluid components; see ~\cite{Coussy}. 
The \textit{solid placement} 
$\chi_\textrm{s}:B_\textrm{s}\times\mathbb{R}\to\mathbb{R}$ is a $C^2$ function such that 
the map $\chi_\textrm{s}(\cdot,t)$, 
associating to each $X_\textrm{s}\in B_\textrm{s}$
the position occupied at time $t$ by the particle labeled 
by $X_\textrm{s}$ in the reference configuration $B_\textrm{s}$,
is a $C^2$--diffeomorphism.
The \textit{fluid placement} map 
$\chi_\textrm{f}:B_\textrm{f}\times\mathbb{R}\to\mathbb{R}$
is defined analogously.
The \textit{current configuration} $B_t:=\chi_\textrm{s}(B_\textrm{s},t)$ at time 
$t$ is the set of positions 
of the superposed solid and fluid particles.
Consider the $C^2$ function
$\phi:B_\textrm{s}\times\mathbb{R}\to B_\textrm{f}$ 
such that 
$\phi(X_\textrm{s},t)$ is 
the fluid particle that at time $t$ occupies the 
same position of the solid particle $X_\textrm{s}$; 
assume, also, that $\phi(\cdot,t)$ is a $C^2$--diffeomorphism 
mapping univocally a solid particle 
into a fluid one.
The three fields
$\chi_\textrm{s}$, $\chi_\textrm{f}$, and $\phi$ are 
not at all independent; indeed, by definition, we immediately have that 
$\chi_\textrm{f}(\phi(X_\textrm{s},t),t)=\chi_\textrm{s}(X_\textrm{s},t)$ 
for any $X_\textrm{s}\in B_\textrm{s}$ and $t\in\mathbb{R}$.

The Lagrangian velocities are two maps associating with each time and
each point in the solid and fluid reference space the velocities of the
corresponding solid and fluid particles at the
specified time.
More precisely,
the \textit{Lagrangian velocities} are the two maps
$u_\alpha:B_\alpha\times\mathbb{R}\to\mathbb{R}$
defined by setting
$u_\alpha(X_\alpha,t):=\partial\chi_\alpha/\partial t$
for any $X_\alpha\in B_\alpha$, where $\alpha=\textrm{s},\textrm{f}$.
We also consider the \textit{Eulerian velocities}
$v_\alpha:B_t\times\mathbb{R}\to\mathbb{R}$ associating
with each point $x\in B_t$ and for each time $t\in\mathbb{R}$ the velocities
of the solid and fluid particle occupying the place $x$ at time $t$;
more precisely we set
$v_\alpha(x,t):=u_\alpha(\chi^{-1}_\alpha(x,t),t)$.

In studying the dynamics of the porous system, we can arbitrarily 
choose two among the three fields 
$\chi_\textrm{s}$, $\chi_\textrm{f}$, and $\phi$. 
Since the reference configuration $B_\textrm{s}$ of the solid component 
is known {\em a priori}, 
a good choice appears to be that of expressing all the dynamical observables 
in terms of the fields $\chi_\textrm{s}$ and $\phi$ which are defined 
on $B_\textrm{s}$. 

It is natural to assume that, if the system is acted upon only by 
conservative forces, its dynamics is described by a 
\textit{Lagrangian density} 
$L$,
relative to the solid reference configuration space volume,
depending on the space variable $X_\textrm{s}$ and on time 
through (in principle)
$\dot\chi_\textrm{s}$, $\dot\phi$, $\chi''_\textrm{s}$, $\phi''$,
$\chi'_\textrm{s}$, $\phi'$, $\chi_\textrm{s}$, and $\phi$.
The Lagrangian density is
equal to the 
\textit{kinetic energy density} minus
the \textit{overall potential energy density} accounting for both 
the internal and the external conservative forces.

Suppose the 
fluid component of the system is acted upon by dissipative
forces.
We consider the independent variations 
$\delta\chi_\textrm{s}$ 
and $\delta\phi$ of the two fields $\chi_\textrm{s}$ and $\phi$ and
denote by $\delta W$ the corresponding elementary 
\textit{virtual work}
made by the dissipative forces acting on the fluid component.
The possible motions of the system, see for instance 
\cite[Chapter~5]{Blanchard},
in an interval of time $(t_1,t_2)\subset\mathbb{R}$
are those such that the fields $\chi_\textrm{s}$ and $\phi$ satisfies
the variational principle
\begin{equation}
\label{vardiss}
\delta 
 \int_{t_1}^{t_2}\textrm{d}t
 \int_{B_\textrm{s}}\textrm{d}X_\textrm{s}
 \,
 {L}(
            \dot\chi_\textrm{s}(X_\textrm{s},t),
            \dots,
            \phi(X_\textrm{s},t))
=
-\int_{t_1}^{t_2}\delta W\,\textrm{d}t
\end{equation}
namely, the variation of the the action integral in 
correspondence of a possible motion is equal to the 
integral over time of minus the virtual work of the dissipative 
forces corresponding to the considered variation of the fields. 

The way in which dissipation has to be 
introduced in saturated porous media 
models is still under debate.
In particular, according to the effectiveness of the hypothesis 
of separation of scales, between the local and macroscopic level,
Darcy's or Stokes' effects are accounted for. We refer 
the interested reader to \cite{CIS2013} for a detailed discussion of this 
issue. 
In this paper, we 
consider the so--called Stokes' effect, i.e., the dissipation 
due to forces controlled by the second derivative 
of the velocity of the 
fluid component measured with respect to the solid.
A natural expression \cite{CIS2013} is 
\begin{equation}
\label{lv01}
 \delta W:=
 -
 \int_{B_t}
 S
 [v_\textrm{f}(x,t)-v_\textrm{s}(x,t)]'
 \,
 [\delta\chi_\textrm{f}(\chi^{-1}_\textrm{f}(x,t),t)
 -
 \delta\chi_\textrm{s}(\chi^{-1}_\textrm{s}(x,t),t)]'
\,\textrm{d}x
\end{equation}
where $\delta\chi_\textrm{f}$ is the variation of the field $\chi_\textrm{f}$
induced by the independent variations $\delta\chi_\textrm{s}$ and 
$\delta\phi$, and $S>0$.

In order to write explicitly the variation of the action one has to 
specify the form of the Lagrangian density.
In the sequel we shall not consider the inertial 
effects, so that, the Lagrangian density will be the opposite of 
the potential energy $\Phi$
density associated to both the internal and external 
conservative forces. 
It is reasonable to assume that the potential energy density 
depends on the space and time variable only via two 
physically relevant functions: 
the strain of the solid and a properly normalized fluid mass 
density~\cite{CIS2013}, i.e.,
\begin{equation}
\label{deformazione}
\varepsilon(X_\textrm{s},t):=[(\chi'_\textrm{s}(X_\textrm{s},t))^2-1]/2
\;\;\textrm{ and }\;\;
m_\textrm{f}(X_\textrm{s},t)
 :=\varrho_{0,\textrm{f}}(\phi(X_\textrm{s},t))
   \phi'(X_\textrm{s},t)
\end{equation}
where $\varrho_{0,\textrm{f}}:B_\textrm{f}\to\mathbb{R}$ 
is a fluid reference \textit{density}.
In other words,  we assume that 
the potential energy density $\Phi$ is a function of the 
fields 
$m_\textrm{f}$ and $\varepsilon$ and on their space
derivative
$m'_\textrm{f}$ and $\varepsilon'$.

By a standard variational computation, see \cite[equation (24)]{CIS2013},
one gets the equation of motion. In this framework, we are interested in the 
geometrically linearized version of such equations: 
we assume $\varrho_{0,\textrm{f}}$ to be constant and 
introduce the \textit{displacement fields} 
$u(X_\textrm{s},t)$ and $w(X_\textrm{s},t)$ by setting
\begin{equation}
\label{gd00}
\chi_\textrm{s}(X_\textrm{s},t)=X_\textrm{s}+u(X_\textrm{s},t)
\;\textrm{ and }\;
\phi(X_\textrm{s},t)=X_\textrm{s}+w(X_\textrm{s},t)
\end{equation}
for any $X_\textrm{s}\in B_\textrm{s}$ and $t\in\mathbb{R}$. We then 
assume that $u$ and $w$ are small, together with 
their space and time derivatives, and write 
\begin{equation}
\label{gd01}
m_\textrm{f}=\varrho_{0,\textrm{f}}(1+w'),\;
m:=m_\textrm{f}-\varrho_{0,\textrm{f}}=\varrho_{0,\textrm{f}}w',\;
\varepsilon\approx u',\;
\end{equation}
where $\approx$ means that all the terms of order larger than one 
have been neglected.
We then write 
the equations of motion 
up to the first order in $u$, $w$, and derivatives:
\begin{equation}
\label{gd02}
 \frac{\partial\Phi}{\partial\varepsilon}
 -
 \Big(
 \frac{\partial\Phi}{\partial\varepsilon'}
 \Big)'
 =0
\;\;\textrm{ and }\;\;
 \frac{\partial\Phi}{\partial m}
      -
      \Big(
      \frac{\partial\Phi}{\partial m'}
      \Big)'
 =
 -\frac{S}{\rho^2_{0,\textrm{f}}}\dot{m}
\end{equation}
with boundary conditions that are compatible with the choices of
Dirichlet and Neumann boundary conditions. 

We specialize the Porous Medium model we are studying by choosing the 
second gradient part of the dimensionless potential energy, that is we assume 
\begin{equation}
\label{sec010}
\Phi(m',\varepsilon',m,\varepsilon)
 :=
 \frac{1}{2}[k_1(\varepsilon')^2+2k_2\varepsilon' m'+k_3(m')^2]
 +
 \Psi(m,\varepsilon)
\end{equation}
with $k_1,k_3>0$, $k_2\in\mathbb{R}$ such that $k_1k_3-k_2^2\ge0$.
These parameters provide energy penalties for 
the formation of interfaces; they have the physical dimensions of
squared lengths and, according with the above mentioned 
conditions, provide a well--grounded identification of the intrinsic
characteristic lengths of the one--dimensional porous continuum.
In this case, equations \eqref{gd02} become
\begin{equation}
\label{gd03}
 \frac{\partial\Psi}{\partial\varepsilon}
 -
 (k_1\varepsilon'+k_2 m')'
 =0
\;\;\textrm{ and }\;\;
 \frac{\partial\Psi}{\partial m}
      -
      (k_2\varepsilon'+k_3m')'
 =
 -\frac{S}{\rho^2_{0,\textrm{f}}}\dot{m}.
\end{equation}
We notice immediately that such a system of PDE has the 
form of the problem $(P)$ introduced in Section~\ref{Strong}
provided the first gradient energy $\Psi$ is a polynomial 
in the strain and in the fluid content. 
The main difference between the two system of equations lies in the fact that 
in the first of the two equations the derivative of the strain 
with respect to time is missing. This is due to the fact that, for simplicity
and for coherence with the previous paper on which our sketch of derivation 
is based, 
we have not considered the 
dissipation forces acting on the solid components. 
If those forces would be taken into account, we would get a 
parabolic--parabolic system as the one in problem $(P)$.

An important application of the theory briefly recalled in this section is 
that to the study of phase transitions in porous media under 
consolidation, namely, when the system is acted upon by an 
external pressure. This issue will be discussed in Section~\ref{s:num}.

\section{Proof of Theorem \ref{main}}
\label{s:dim00}

In this Section, we prove Theorem \ref{main} via a Leray-Schauder fixed point argument. Firstly, we study a regularised version of Problem $(P)$ for which we prove the existence of a strong solution, see Theorem \ref{strongsolutionpdelta}. The proof of Theorem \ref{strongsolutionpdelta} is divided in two main steps: In the first one, we introduce an auxiliary problem, depending on a parameter $\zeta\in[0,1]$, for which a direct application of the theory of quasi-linear parabolic equations gives a unique classical  solution. The second step is concerned with  the definition of a 
nonlinear mapping that satisfies the hypothesis of the Leray Schauder argument, Theorem 
\ref{lerayschauder}. Once Theorem \ref{strongsolutionpdelta} is proven, we exploit weak convergence methods to get the obtain the conclusion of the main Theorem \ref{main}.

\subsection{The regularized problem}
\label{regularized}
Let us introduce the following mollified version of problem $(P)$, namely: Find the pair $(\varepsilon,m)$ satisfying
\begin{align}
&\frac{\partial\varepsilon}{\partial t}+\mbox{div}(-k_1\nabla\varepsilon-k_2\nabla^\delta m)=\hat{f}_1(m,\varepsilon)&\mbox{in}\,\Omega,
\label{molliproblemadipartenzaconepsilonpunto}
\\
&\frac{\partial m}{\partial t}+\mbox{div}(-k_2\nabla^\delta\varepsilon-k_3\nabla m)=\hat{f}_2(m,\varepsilon)&\mbox{in}\,\Omega,
\label{2molliproblemadipartenzaconepsilonpunto}
\\
&\varepsilon(0)=\varepsilon_0,&\mbox{in}\,\Omega,
\label{3molliproblemadipartenzaconepsilonpunto}
\\
&m(0)=m_0&\mbox{in}\,\Omega,
\label{4molliproblemadipartenzaconepsilonpunto}
\\
&\varepsilon(l_1,t)=\varepsilon_D(t)&\mbox{in}\,S,
\label{5molliproblemadipartenzaconepsilonpunto}
\\
&m(l_1,t)=m_D(t)&\mbox{in}\,S,
\label{6molliproblemadipartenzaconepsilonpunto}
\\
&\frac{\partial \varepsilon}{\partial x}(l_2,t)=0&\mbox{in}\,S,
\label{7molliproblemadipartenzaconepsilonpunto}
\\
&\frac{\partial m}{\partial x}(l_2,t)=0&\mbox{in}\,S.
\label{8molliproblemadipartenzaconepsilonpunto}
\end{align}

We refer to 
\eqref{molliproblemadipartenzaconepsilonpunto}--\eqref{8molliproblemadipartenzaconepsilonpunto} as problem $(P^\delta)$. In this section, we prove the existence of strong solutions to problem $(P^\delta)$.

To define problem  $(P^\delta)$, we use the following definition of the mollified gradient of a function $f$ [see e.g. \cite{krehel}]:
\begin{eqnarray}
\nabla^\delta f:=\nabla\left[\int_{B_\delta(x)}J_\delta(x-y)f(y) dy\right],
\label{defnabladelta}
\end{eqnarray}
where $J_\delta$ denotes the standard mollifier defined for example in \cite{fournier} and $B_\delta(x)$ is a ball centred in $x\in \Omega$ with radius $\delta>0$ chosen such that $x+\delta\in\Omega$. A mollified function $u$ enjoys of the following properties:
\begin{thm} 
Let $u$ be a function which is defined on $\mathbb{R}^n$ and vanishes identically outside $\Omega$. If $u\in L^p(\Omega)$, $1\leq p<\infty$, then $J_\delta*u\in L^p(\Omega)$. Also
\[
\|J_\delta*u\|_{L^p(\Omega)}\leq\|u\|_{L^p(\Omega)}\qquad \mbox{and} \qquad \lim_{\delta\rightarrow 0^+}\|J_\delta*u-u\|_{L^p(\Omega)}=0.
\]
\label{propmoll}
Also,  for all $f\in L^\infty(\Omega)$ and $1\leq p\leq\infty$,  it exists a constant $c^\delta>0$ such that
\begin{eqnarray}
\|\nabla^\delta f\|_{L^p(\Omega)}\leq c^\delta\|f\|_{L^2(\Omega)}.
\label{propnabladelta}
\end{eqnarray}
\end{thm}
Note that as $\delta\to 0$, typically $c^\delta \to \infty$.

\begin{thm}\label{strongsolutionpdelta}
Assume $H_1$--$H_4$. Problem $(P^\delta)$ has at least a strong solution $$(\varepsilon_\delta,m_\delta)\in\varepsilon_D+ V^{2,1}_2(\Omega\times S)\times m_D+V^{2,1}_2(\Omega\times S).$$
\end{thm}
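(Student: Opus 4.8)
The plan is to realise a solution of $(P^\delta)$ as a fixed point of a compact nonlinear operator and invoke the Leray--Schauder principle (Theorem \ref{lerayschauder}). First I would homogenise the boundary condition by fixing smooth liftings $E(x,t),M(x,t)$ with $E(l_1,t)=\varepsilon_D(t)$, $M(l_1,t)=m_D(t)$; thanks to $H_1$ these and their time derivatives are bounded. Working with $X:=L^2(S;H^1(\Omega))^2$ and seeking the (homogenised) unknowns in the closed subspace $L^2(S;V)^2$, I associate with any frozen datum $(\bar\varepsilon,\bar m)$ and homotopy parameter $\zeta\in[0,1]$ the auxiliary problem $(P^\delta_\zeta)$ obtained by freezing the mollified cross terms and the reaction and scaling them by $\zeta$:
\[
\partial_t\varepsilon-k_1\Delta\varepsilon=\zeta\big[k_2\,\mbox{div}\,\nabla^\delta\bar m+\hat f_1(\bar m,\bar\varepsilon)\big],\qquad \partial_t m-k_3\Delta m=\zeta\big[k_2\,\mbox{div}\,\nabla^\delta\bar\varepsilon+\hat f_2(\bar m,\bar\varepsilon)\big],
\]
together with the mixed Dirichlet--Neumann and initial data of $(P^\delta)$. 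The decisive structural gain is that, after freezing, the two equations \emph{decouple} into scalar linear parabolic equations with constant principal coefficients $k_1,k_3>0$ and right--hand sides in $L^2(\Omega\times S)$: the sources $\hat f_1,\hat f_2$ are bounded by the truncation $H_3$, while $\mbox{div}\,\nabla^\delta\bar m$ is controlled by $C^\delta\|\bar m\|_{L^2(\Omega)}$ through the smoothing of $J_\delta$ (cf.\ Theorem \ref{propmoll}). Standard linear parabolic theory (equivalently, the frozen--coefficient quasilinear theory) then delivers a unique solution $(\varepsilon,m)\in V_2^{2,1}(\Omega\times S)^2$, and I set $\mathcal T(\bar\varepsilon,\bar m;\zeta):=(\varepsilon,m)$.

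Next I would verify the hypotheses of the Leray--Schauder theorem for $\mathcal T$. At $\zeta=0$ the right--hand sides vanish, so the output is determined by the data alone and $\mathcal T(\cdot;0)$ is constant. Compactness is inherited from parabolic smoothing: $\mathcal T$ maps bounded subsets of $X$ into bounded subsets of $V_2^{2,1}(\Omega\times S)^2$, which embeds compactly into $X$ by \eqref{imbedding1} (a consequence of Theorem \ref{aubin}) together with the equivalence $\|\cdot\|_{H^1(\Omega)}\sim\|\cdot\|_{V(\Omega)}$. For continuity, the mollified coupling $(\bar\varepsilon,\bar m)\mapsto\mbox{div}\,\nabla^\delta(\cdot)$ is linear and bounded, the frozen reaction is uniformly bounded, and along an a.e.--convergent subsequence dominated convergence transfers $\hat f_i(\bar m,\bar\varepsilon)$ to the limit; combined with continuous dependence of the linear parabolic solution on its $L^2$ data this closes the argument. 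This is the only point where the sharp cutoff of $H_3$ needs some care, but its boundedness makes the passage to the limit routine.

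The main obstacle is the $\zeta$--uniform a priori bound: I must exhibit $R>0$, independent of $\zeta$, with $\|(\varepsilon,m)\|_X\le R$ for every fixed point $(\varepsilon,m)=\mathcal T(\varepsilon,m;\zeta)$. Testing the two (now genuinely coupled) equations with $\varepsilon$ and $m$, shifted by the liftings $E,M$, and integrating by parts using the homogeneous Neumann condition at $l_2$, the diagonal terms produce $k_1\|\nabla\varepsilon\|_{L^2}^2+k_3\|\nabla m\|_{L^2}^2$, the cross terms become $\zeta k_2(\nabla^\delta m,\nabla\varepsilon)_{L^2}$ and $\zeta k_2(\nabla^\delta\varepsilon,\nabla m)_{L^2}$, which I estimate by \eqref{propnabladelta} and Young's inequality as $\tfrac{k_1}{4}\|\nabla\varepsilon\|_{L^2}^2+C(c^\delta)\|m\|_{L^2}^2$ (and symmetrically), absorbing the gradients into the diagonal terms; the sources are controlled by $\|\hat f_i\|_{L^\infty}$ and the lifting contributions by $H_1$. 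Gronwall's inequality then yields a $\zeta$--independent bound in $L^\infty(S;L^2(\Omega))\cap L^2(S;H^1(\Omega))$, and reinserting it into the linear parabolic regularity estimate upgrades it to a uniform $V_2^{2,1}$ bound.

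Here one sees precisely why the regularisation is indispensable: the mollifier degrades $\nabla^\delta m$ to a term dominated by $c^\delta\|m\|_{L^2}$, so the off--diagonal coupling is effectively lower order and the energy estimate closes \emph{without} appealing to the positivity condition $H_5$ (the price being $c^\delta\to\infty$ as $\delta\to0$, which is irrelevant for $\delta$ fixed here). With the a priori bound established, Leray--Schauder produces a fixed point of $\mathcal T(\cdot;1)$; undoing the lifting, this fixed point is the desired strong solution $(\varepsilon_\delta,m_\delta)\in\big(\varepsilon_D+V_2^{2,1}(\Omega\times S)\big)\times\big(m_D+V_2^{2,1}(\Omega\times S)\big)$.
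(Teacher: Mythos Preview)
Your proof is correct and follows essentially the same route as the paper: a Leray--Schauder argument in $L^2(S;H^1(\Omega))^2$ built on an auxiliary problem with frozen mollified cross terms, solvability of the resulting decoupled scalar parabolic equations, compactness via \eqref{imbedding1}, and a $\zeta$--uniform energy bound exploiting \eqref{propnabladelta} to render the coupling lower order. The only differences are cosmetic---the paper places $\zeta$ on the initial and boundary data rather than on the sources, and it partially linearises the reaction through auxiliary polynomials $\hat F_i$ instead of freezing $\hat f_i$ completely---and one small caveat: Theorem~\ref{lerayschauder} as stated requires $\mathcal T(v,0)=0$, so either scale the data by $\zeta$ as well or translate by the constant heat flow before invoking it.
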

\begin{proof}
The strong solution of the regularized problem $(P^\delta)$ is obtained here by a direct application of the Leray--Schauder fixed point theorem, viz. 
\begin{thm}[Leray--Schauder Fixed Point Theorem]\label{lerayschauder}
Let $X$ be a Banach space and let $\mathcal{T}$ be a completely continuous mapping of $X\times[0,1]$ into $X$ such that $\mathcal{T}(v,0)=0$ for all $v\in X$. Suppose there exists a constant $R>0$ such that
\begin{eqnarray}
\|v\|_X\leq R
\end{eqnarray} 
for all $(v,\zeta)\in X\times[0,1]$ satisfying $v=\mathcal{T}(v,\zeta)$. Then the mapping $\mathcal{T}_1$ of $X$ into itslef given by $$\mathcal{T}_1(v):=\mathcal{T}(v,1)$$ has a fixed point.
\end{thm}
A nice proof of the Leray--Schauder Theorem can be found e.g. in \cite{gilbarg}, Theorem 11.6; see also \cite{bookprecup}.\\\\
\textbf{Solution to an auxiliary problem.}
For any given couple $(\tilde{\varepsilon},\tilde{m})$ with $$\tilde{\varepsilon}\in \zeta\varepsilon_D+L^2(S;V),\,m\in\zeta m_D+L^2(S;V)$$ and $\zeta \in[0,1]$, consider the initial boundary value problem 
\begin{align}
&\frac{\partial\varepsilon}{\partial t}+\mbox{div}(-k_1\nabla\varepsilon-k_2\nabla^\delta \tilde{m})=\hat{F}_1(\tilde{\varepsilon},\tilde{m},\varepsilon)&\mbox{in}\,\Omega\times S,
\label{auxzetamolliproblemadipartenzaconepsilonpunto}
\\&
\frac{\partial m}{\partial t}+\mbox{div}(-k_2\nabla^\delta\tilde{\varepsilon}-k_3\nabla m)=\hat{F}_2(\tilde{\varepsilon} ,\tilde{m},m)&\mbox{in}\,\Omega\times S,
\label{auxzeta2molliproblemadipartenzaconepsilonpunto}
\\
&\varepsilon(0)=\zeta\varepsilon_0,&\mbox{in}\,\Omega,
\label{auxzeta3molliproblemadipartenzaconepsilonpunto}
\\
& m(0)=\zeta m_0&\mbox{in}\,\Omega,
\label{auxzeta4molliproblemadipartenzaconepsilonpunto}
\\
&\varepsilon(l_1,t)=\zeta\varepsilon_D(t)&\mbox{in}\,S,
\label{auxzeta5molliproblemadipartenzaconepsilonpunto}
\\& m(l_1,t)=\zeta m_D(t)&\mbox{in}\,S,
\label{auxzeta6molliproblemadipartenzaconepsilonpunto}
\\
&\frac{\partial \varepsilon}{\partial x}(l_2,t)=0&\mbox{in}\,S,
\label{auxzeta7molliproblemadipartenzaconepsilonpunto}
\\
&\frac{\partial m}{\partial x}(l_2,t)=0&\mbox{in}\,S.
\label{auxzeta8molliproblemadipartenzaconepsilonpunto}
\end{align}
where we have set 
\begin{equation}
\hat{F}_1(\tilde{\varepsilon},\tilde{m},\varepsilon):=
\sum_{k_2=0}^{n_2^{(1)}}A_{0k_2}^{(1)}\tilde{m}^{k_2}+\sum_{k_2=0}^{n_2^{(1)}}A_{1k_2}^{(1)}\tilde{m}^{k_2}\varepsilon+\sum_{k_1=2}^{n^{(1)}_1}\sum_{k_2=0}^{n^{(1)}_2}A_{k_1k_2}^{(1)}\tilde{m}^{k_2}\tilde{\varepsilon}^{k_1-1}\varepsilon
\end{equation}
if $|\varepsilon|,|\tilde{\varepsilon}|\leq M_1$ and 
$|m|,|\tilde{m}|\leq M_2$ and 
$\hat{F}_1(\tilde{\varepsilon},\tilde{m},\varepsilon):=0$ otherwise, 
and 
\begin{equation}
\hat{F}_2(\tilde{\varepsilon} ,\tilde{m},m):=
\sum_{k_1=0}^{n_1^{(2)}}A_{k_10}^{(2)}\tilde{\varepsilon}^{k_2}+\sum_{k_1=0}^{n_1^{(2)}}A_{k_11}^{(2)}\tilde{\varepsilon}^{k_1}m+\sum_{k_2=2}^{n^{(2)}_2}\sum_{k_1=0}^{n^{(2)}_1}A_{k_1k_2}^{(2)}\tilde{\varepsilon}^{k_1}\tilde{m}^{k_2-1}m
\end{equation}
if $|\varepsilon|,|\tilde{\varepsilon}|\leq M_1$ and 
$|m|,|\tilde{m}|\leq M_2$, and 
$\hat{F}_2(\tilde{\varepsilon} ,\tilde{m},m):= 0$ otherwise.

We note that $\hat{F}_1(\varepsilon,m,\varepsilon)=\hat{f}_1(\varepsilon,m)$ and $\hat{F}_2(\varepsilon,m,m)=\hat{f}_2(\varepsilon,m)$.
We split the proof of existence of solutions to the system \eqref{auxzetamolliproblemadipartenzaconepsilonpunto}--\eqref{auxzeta8molliproblemadipartenzaconepsilonpunto}  into two steps:

\medskip
\par\noindent
\textbf{Step 1.\/} 
The system of equations \eqref{auxzetamolliproblemadipartenzaconepsilonpunto}, \eqref{auxzeta3molliproblemadipartenzaconepsilonpunto}, \eqref{auxzeta5molliproblemadipartenzaconepsilonpunto} and \eqref{auxzeta7molliproblemadipartenzaconepsilonpunto} is a special case of the problem 
\begin{align}
&\varepsilon_t-g_1(x,t)\varepsilon_{xx}-g_2(x,t,\varepsilon,\varepsilon_x)=0&\mbox{in}\,\Omega\times S,\\
&\varepsilon(x,0)=\zeta\varepsilon_0&\mbox{in}\,\Omega,
\\
&\varepsilon(l_1,t)=\zeta\varepsilon_D(t)&\mbox{in}\,S,
\\
&\frac{\partial \varepsilon}{\partial x}(l_2,t)=0 &\mbox{in}\,S.
\end{align}
In our case, we have
\begin{eqnarray*}
&&g_1(x,t)=k_1,\\
&&g_2(x,t,\varepsilon,\varepsilon_x)=-k_2\mbox{div}(\nabla^\delta\tilde{m})+\sum_{k_2=0}^{n_2^{(1)}}A_{0k_2}^{(1)}\tilde{m}^{k_2}
+\sum_{k_2=0}^{n_2^{(1)}}A_{1k_2}^{(1)}\tilde{m}^{k_2}\varepsilon+
\sum_{k_1=2}^{n^{(1)}_1}\sum_{k_2=0}^{n^{(1)}_2}A_{k_1k_2}^{(1)}\tilde{m}^{k_2}\tilde{\varepsilon}^{k_1-1}\varepsilon.
\end{eqnarray*}
Under the assumptions $H_1-H_4$ and trusting the classical theory of quasi-linear parabolic equations (see Theorem 7.4, Chapter V in \cite{lady}), for any given $$(\tilde{\varepsilon},\tilde{m})\in L^2(S;H^1(\Omega))\times L^2(S;H^1(\Omega))$$ the problem \eqref{auxzetamolliproblemadipartenzaconepsilonpunto},\eqref{auxzeta3molliproblemadipartenzaconepsilonpunto},\eqref{auxzeta5molliproblemadipartenzaconepsilonpunto} and \eqref{auxzeta7molliproblemadipartenzaconepsilonpunto} admits the unique solution $\varepsilon\in V_2^{2,1}(\Omega\times S)$.

\medskip
\par\noindent
\textbf{Step 2.\/} The system of equations \eqref{auxzeta2molliproblemadipartenzaconepsilonpunto},\eqref{auxzeta4molliproblemadipartenzaconepsilonpunto},\eqref{auxzeta6molliproblemadipartenzaconepsilonpunto} and \eqref{auxzeta8molliproblemadipartenzaconepsilonpunto} can be treated in an analogous way as in \textbf{Step 1}, in fact such system is a special case of the problem
\begin{align}
&m_t-g_3(x,t)m_{xx}-g_4(x,t,m,m_x)=0&\mbox{in}\,\Omega\times S,\\
&m(x,0)=\zeta m_0&\mbox{in}\,\Omega,
\\
&m(l_1,t)=\zeta m_D(t)&\mbox{in}\,S,
\\
&\frac{\partial m}{\partial x}(l_2,t)=0 &\mbox{in}\,S.
\end{align}
In this case, we have
\begin{eqnarray*}
&&g_3(x,t)=k_3,\\
&&g_4(x,t,m,m_x)=-k_2\mbox{div}(\nabla^\delta\tilde{\varepsilon})-\alpha \tilde{m}^2m-2\alpha b\tilde{\varepsilon}\tilde{m}m-b^2\alpha\tilde{\varepsilon
}^2m+\sum_{k_1=0}^{n_1^{(2)}}A_{k_10}^{(2)}\tilde{\varepsilon}^{k_2}
\\
&&+\sum_{k_1=0}^{n_1^{(2)}}A_{k_11}^{(2)}\tilde{\varepsilon}^{k_1}m
+\sum_{k_2=2}^{n^{(2)}_2}\sum_{k_1=0}^{n^{(2)}_1}A_{k_1k_2}^{(2)}\tilde{\varepsilon}^{k_1}\tilde{m}^{k_2-1}m.
\end{eqnarray*}
Based on the assumptions $H_1$--$H_4$ and relying once more on the theory for quasi--linear parabolic equations (see  Theorem 7.4, Chapter V in \cite{lady}), for any given $$(\tilde{\varepsilon},\tilde{m})\in L^2(S;H^1(\Omega))\times L^2(S;H^1(\Omega))$$ the problem \eqref{auxzeta2molliproblemadipartenzaconepsilonpunto},\eqref{auxzeta4molliproblemadipartenzaconepsilonpunto},\eqref{auxzeta6molliproblemadipartenzaconepsilonpunto} and \eqref{auxzeta8molliproblemadipartenzaconepsilonpunto} admits the unique solution $m\in V_2^{2,1}(\Omega\times S)$.\\\\
Finally, we conclude that for any given couple $(\tilde{\varepsilon},\tilde{m})\in L^2(S;H^1(\Omega))\times L^2(S;H^1(\Omega))$  and $0\leq\zeta\leq 1$, we have $(\varepsilon,m)\in V_2^{2,1}(\Omega\times S)\times V_2^{2,1}(\Omega\times S)$ as a solution to the problem \eqref{auxzetamolliproblemadipartenzaconepsilonpunto}--\eqref{auxzeta8molliproblemadipartenzaconepsilonpunto}.

\begin{Definition}
Denote by $X:=L^2(S;H^1(\Omega))\times L^2(S;H^1(\Omega))$, take arbitrary  $(\tilde{\varepsilon},\tilde{m})\in X,\,\zeta\in[0,1]$ and take the couple $(\varepsilon,m)\in V_2^{2,1}(\Omega\times S)\subset X$ as the solution to problem \eqref{auxzetamolliproblemadipartenzaconepsilonpunto}--\eqref{auxzeta8molliproblemadipartenzaconepsilonpunto}. We define the nonlinear mapping $\mathcal{G}:X\times[0,1]\rightarrow X$ by means of the equation 
\begin{eqnarray}
(\varepsilon,m)=\mathcal{G}(\tilde{\varepsilon},\tilde{m},\zeta).
\end{eqnarray}
\label{defmapping}
\end{Definition}

\noindent\textbf{Basic \textit{a priori} estimates.}
Now, we prove  unifrom estimates for all $(\varepsilon,m)\in X$ satisfying the equation $(\varepsilon,m)=\mathcal{G}(\varepsilon,m,\zeta)$ for some $\zeta\in[0,1]$. In fact, note that if $(\varepsilon,m)\in X$ is a fixed point of $\mathcal{G}(\cdot,\cdot,\zeta),$ then $(\varepsilon,m)\in V_2^{2,1}(\Omega\times S)$. Define now
\[
\hat{G}_1(\tilde{\varepsilon},\tilde{m},\varepsilon):=\hat{F}_1(\tilde{\varepsilon},\tilde{m},\varepsilon)+\partial_t\varepsilon_D,\;\hat{G}_2(\tilde{\varepsilon},\tilde{m},m)_2:=\hat{F}_2(\tilde{\varepsilon},\tilde{m},m)+\partial_tm_D,
\]
and test \eqref{auxzetamolliproblemadipartenzaconepsilonpunto} by $\varphi\in V$ and \eqref{auxzeta2molliproblemadipartenzaconepsilonpunto} by $\psi\in V$.\\ 
We multiply \eqref{auxzetamolliproblemadipartenzaconepsilonpunto} by $\varphi=\varepsilon$ and integrate it over $\Omega\times S$, to get
\begin{eqnarray}
&&\frac{1}{2}\frac{d}{dt}\int_S\|\varepsilon\|^2_{L^2(\Omega)}ds+k_1\int_S\|\nabla\varepsilon\|^2_{L^2(\Omega)}ds=
\nonumber\\&&-k_2\int_S(\nabla^\delta\tilde{m},\nabla\varepsilon)_{L^2(\Omega)}ds+\int_S(\hat{G}_1(\tilde{\varepsilon},\tilde{m},\varepsilon),\varepsilon)_{L^2(\Omega)}ds.
\label{newestimateepsilon1}
\end{eqnarray}
Consequently, we obtain
\begin{eqnarray}
&&\frac{1}{2}\frac{d}{dt}\int_S\|\varepsilon\|^2_{L^2(\Omega)}ds+k_1\int_S\|\nabla\varepsilon\|^2_{L^2(\Omega)}ds\\\nonumber
&&\leq \int_S\left|(\hat{G}_1(\tilde{\varepsilon},\tilde{m},\varepsilon),\varepsilon)_{L^2(\Omega)}\right|ds
+c_\eta k_2^2\int_S\|\nabla^\delta\tilde{m}\|^2_{L^2(\Omega)}ds+\eta\int_S\|\nabla\varepsilon\|^2_{L^2(\Omega)}ds,
\end{eqnarray}
where we have applied the Young inequality on the right hand side of \eqref{newestimateepsilon1}. It is easy to prove that there exist $c_1\,,c_3>0$ such that
\begin{eqnarray}
\int_S\left|(\hat{G}_1(\tilde{\varepsilon},\tilde{m},\varepsilon),\varepsilon)_{L^2(\Omega)}\right|ds\leq c_1+c_3\int_S\|\varepsilon\|^2_{L^2(\Omega)}ds.
\end{eqnarray}
Thus, defining $C_1:=c_1+c_\eta k_2^2T\|\nabla^\delta\tilde{m}\|^2_{L^\infty(\Omega)},\,c_2:=(k_1-\eta)$, and then taking $\eta<k_1$, we obtain
\begin{eqnarray}
\frac{1}{2}\left(\|\varepsilon\|^2_{L^2(\Omega)}-\|\varepsilon(0)\|^2_{L^2(\Omega)}\right) +c_2\int_S\|\nabla\varepsilon\|^2_{L^2(\Omega)}ds\leq C_1+c_3\int_S\|\varepsilon\|^2_{L^2(\Omega)}ds.
\end{eqnarray}
Applying Gronwall's inequality, we get the following uniform estimate for $\varepsilon$
\begin{eqnarray}
\|\varepsilon\|_{L^2(S;V)}\leq C,
\label{energyestimateepsilon}
\end{eqnarray}
for a positive constant $C$ independent of $\zeta$ and $\delta$.\\
We multiply now \eqref{auxzeta2molliproblemadipartenzaconepsilonpunto} by $\psi=m$ and then integrate it over $\Omega\times S$, to get
\begin{eqnarray}
&&\frac{1}{2}\frac{d}{dt}\int_S\|m\|^2_{L^2(\Omega)}ds+k_3\int_S\|\nabla m\|^2_{L^2(\Omega)}ds=
\\\nonumber&&-k_2\int_S(\nabla^\delta\tilde{\varepsilon},\nabla m)_{L^2(\Omega)}ds+\int_S(\hat{G}_2(\tilde{\varepsilon},\tilde{m},m),m)_{L^2(\Omega)}ds.
\label{newestimatem1}
\end{eqnarray}
Consequently,
\begin{eqnarray}
&&\frac{1}{2}\frac{d}{dt}\int_S\|m\|^2_{L^2(\Omega)}ds+k_3\int_S\|\nabla m\|^2_{L^2(\Omega)}ds
\leq
\nonumber\\&&\int_S(\hat{G}_2(\tilde{\varepsilon},\tilde{m},m),m)_{L^2(\Omega)}ds
+c_\eta k_2^2\int_S\|\nabla^\delta\tilde{\varepsilon}\|_{L^2(\Omega)}ds+\eta\int_S\|\nabla m\|^2_{L^2(\Omega)}ds,
\end{eqnarray}
where we have applied the Young inequality on the right hand side of \eqref{newestimatem1}. Now it is easy to find $c_4>0$ and $c_6>0$ such that
\begin{eqnarray}
\int_S\left|(\hat{G}_2(\tilde{\varepsilon},\tilde{m},m),m)_{L^2(\Omega)}\right|ds\leq c_4+c_6\int_S\|m\|^2_{L^2(\Omega)}ds.
\end{eqnarray}
Take $C_4:=c_4+c_\eta k_2^2T\|\nabla^\delta\tilde{\varepsilon}\|^2_{L^\infty(\Omega)},\,c_5:=(k_3-\eta)$, and then taking $\eta<k_3$, we obtain
\begin{eqnarray}
\frac{1}{2}\left(\|m\|^2_{L^2(\Omega)}-\|m(0)\|^2_{L^2(\Omega)}\right)+c_5\int_S\|\nabla m\|^2_{L^2(\Omega)}ds\leq C_4+c_6\int_S\|m\|^2_{L^2(\Omega)}ds.
\end{eqnarray}
By the Gronwall argument, we get the desired uniform estimate for $m$
\begin{eqnarray}
\|m\|_{L^2(S;V)}\leq C,
\label{energyestimatem}
\end{eqnarray}
for a positive constant $C$ independent of $\zeta$ and $\delta$.\\ 
The following estimates are a direct consequence of \eqref{energyestimateepsilon} and \eqref{energyestimatem}:
\begin{eqnarray}
\label{energyestimateinftyepsilon1}
\|\varepsilon\|_{L^\infty(S;L^2(\Omega))}\leq c,\\
\|m\|_{L^\infty(S;L^2(\Omega))}\leq c.
\label{energyestimateinftym1}
\end{eqnarray}

Now, we observe that, by construction, actually $\hat{f}_1\in L^\infty(S;L^\infty(\Omega))$. In particular $\hat{f}_1\in L^2(S;L^2(\Omega))$, so we can show that
\begin{eqnarray}
\|\partial_t\varepsilon\|_{L^2(S;L^2(\Omega))}\leq C.
\label{stimaepsilot2}
\end{eqnarray}
The same property holds for $m$, i. e.
\begin{eqnarray}
\|\partial_t m\|_{L^2(S;L^2(\Omega))}\leq C.
\label{stimamt2}
\end{eqnarray}
\\\\Having established these basic estimates we are ready to complete the proof of Theorem \ref{strongsolutionpdelta} using the Leray--Schauder approach.
\\\\
\textbf{Leray--Schauder fixed point argument.} Take $X:=L^2(S;V)\times L^2(S;V)$  and $\mathcal{G}:X\times[0,1]\rightarrow X$ defined by $\mathcal{G}(\tilde{\varepsilon},\tilde{m},\zeta)=(\varepsilon,m)$ (see Definition \ref{defmapping}), where $(\varepsilon,m)$ is the solution to the auxiliary problem \eqref{auxzetamolliproblemadipartenzaconepsilonpunto}--\eqref{auxzeta8molliproblemadipartenzaconepsilonpunto}. 

First of all, let us prove that $\mathcal{G}:X\times[0,1]\rightarrow X$ is continuous. To this aim we follow the spirit of \cite{satta}. For any sequence $(\tilde{\varepsilon}_n,\tilde{m}_n,\zeta_n)\in X\times[0,1]$ such that
$$(\tilde{\varepsilon}_n,\tilde{m}_n,\zeta_n)\rightarrow(\varepsilon,m,\zeta)\quad\mbox{in}\,X\times[0,1],$$
we denote by $\varepsilon_n$ the solution of the auxiliary problem
\begin{align}
&\partial_t\varepsilon_n-g_1(x,t)\varepsilon_{n,xx}-g_2(x,t,\varepsilon_n,\varepsilon_{n,x})=0&\mbox{in}\,\Omega\times S,\\
&\varepsilon_n(x,0)=\zeta_n\varepsilon_0&\mbox{in}\,\Omega,
\\
&\varepsilon_n(l_1,t)=\zeta_n\varepsilon_D(t)&\mbox{in}\,S,
\\
&\frac{\partial \varepsilon_n}{\partial x}(l_2,t)=0 &\mbox{in}\,S,
\end{align}
and by $m_n$ the solution of the auxiliary problem
\begin{align}
&\partial_tm_n-g_3(x,t)m_{xx}-g_4(x,t,m,m_x)=0&\mbox{in}\,\Omega\times S,\\
&m_n(x,0)=\zeta_nm_0&\mbox{in}\,\Omega,
\\
&m_n(l_1,t)=\zeta_n m_D(t)&\mbox{in}\,S,
\\
&\frac{\partial m_n}{\partial x}(l_2,t)=0 &\mbox{in}\,S,
\end{align}
while $(\varepsilon,m)$ is the solution to the auxiliary problem \eqref{auxzetamolliproblemadipartenzaconepsilonpunto}--\eqref{auxzeta8molliproblemadipartenzaconepsilonpunto}.\\Subtracting the corresponding equations and testing with $\varepsilon_n-\varepsilon$ and $m_n-m$, we obtain:
\begin{eqnarray}
&&(\partial_t\varepsilon_n-\partial_t\varepsilon,\varepsilon_n-\varepsilon)_{L^2(\Omega)}+(\mbox{div}(-k_1\nabla\varepsilon_n+k_1\nabla\varepsilon),\varepsilon_n-\varepsilon)_{L^2(\Omega)}
\nonumber\\&&+(\mbox{div}(-k_2\nabla^\delta\tilde{m}_n+k_2\nabla^\delta\tilde{m}),\varepsilon_n-\varepsilon)_{L^2(\Omega)}=(\hat{F}_1(\tilde{\varepsilon}_n,\tilde{m}_n,\varepsilon)-\hat{F}_1(\tilde{\varepsilon},\tilde{m},\varepsilon),\varepsilon_n-\varepsilon)_{L^2(\Omega)},
\nonumber\\
\label{1continuitymapping}
\end{eqnarray}
and
\begin{eqnarray}
&&(\partial_tm_n-\partial_tm,m_n-m)_{L^2(\Omega)}+(\mbox{div}(-k_3\nabla m_n+k_3\nabla m),m_n-m)_{L^2(\Omega)}
\nonumber\\&&+(\mbox{div}(-k_2\nabla^\delta\tilde{\varepsilon}_n+k_2\nabla^\delta\tilde{\varepsilon}),m_n-m)_{L^2(\Omega)}=(\hat{F}_2(\tilde{\varepsilon}_n,\tilde{m}_n,m)-\hat{F}_2(\tilde{\varepsilon},\tilde{m},m),m_n-m)_{L^2(\Omega)}.
\nonumber\\
\label{2continuitymapping}
\end{eqnarray}
Using Young's inequality in \eqref{1continuitymapping}, we get for $\eta>0$
\begin{eqnarray}
&&\frac{1}{2}\frac{d}{dt}\|\varepsilon_n-\varepsilon\|^2_{L^2(\Omega)}+k_1\|\nabla(\varepsilon_n-\varepsilon)\|^2_{L^2(\Omega)}\leq \eta\|\nabla(\varepsilon_n-\varepsilon)\|^2_{L^2(\Omega)}
\nonumber\\&&+c_\eta k_2^2\|\nabla^\delta(\tilde{m}_n-\tilde{m})\|^2_{L^2(\Omega)}
+(\hat{F}_1(\tilde{\varepsilon}_n,\tilde{m}_n,\varepsilon)-\hat{F}_1(\tilde{\varepsilon},\tilde{m},\varepsilon),\varepsilon_n-\varepsilon)_{L^2(\Omega)}.
\label{3continuitymapping}
\end{eqnarray}
Now, it is straightforward to show that
\begin{eqnarray}
\left|\hat{F}_1(\tilde{\varepsilon}_n,\tilde{m}_n,\varepsilon)-\hat{F}_1(\tilde{\varepsilon},\tilde{m},\varepsilon)\right|\leq c_1|\tilde{\varepsilon
}_n-\tilde{\varepsilon}|+c_2|\tilde{m}_n-\tilde{m}|. 
\end{eqnarray}
The constants $c_1$ and $c_2$ can be computed here explicitly if needed.

From \eqref{2continuitymapping} and proceeding in analogous way as for \eqref{1continuitymapping}, we have for $\eta>0$
\begin{eqnarray}
&&\frac{1}{2}\frac{d}{dt}\|m_n-m\|^2_{L^2(\Omega)}+k_3\|\nabla(m_n-m)\|^2_{L^2(\Omega)}\leq \eta\|\nabla(m_n-m)\|^2_{L^2(\Omega)}
\nonumber\\&&+c_\eta k_2^2\|\nabla^\delta(\tilde{\varepsilon}_n-\tilde{\varepsilon})\|^2_{L^2(\Omega)}+
(\hat{F}_2(\tilde{\varepsilon}_n,\tilde{m}_n,m)-\hat{F}_2(\tilde{\varepsilon},\tilde{m},m),m_n-m)_{L^2(\Omega)}.
\label{4continuitymapping}
\end{eqnarray}
It holds, as before, that 
\begin{eqnarray}
\left|\hat{F}_2(\tilde{\varepsilon}_n,\tilde{m}_n,m)-\hat{F}_2(\tilde{\varepsilon},\tilde{m},m)\right|\leq c_3|\tilde{\varepsilon}_n-\tilde{\varepsilon}|+c_4|\tilde{m}_n-\tilde{m}|.
\end{eqnarray}
Summing up \eqref{3continuitymapping} and \eqref{4continuitymapping} and using  Young's  inequality and the property \eqref{propnabladelta} in combination  with Poincar\'{e}'s inequality, we obtain
\begin{eqnarray}
&&\frac{1}{2}\frac{d}{dt}\left(\|\varepsilon_n-\varepsilon\|^2_{L^2(\Omega)}+
\|m_n-m\|^2_{L^2(\Omega)}\right)
+(k_1-\eta)\|\nabla(\varepsilon_n-\varepsilon)\|^2_{L^2(\Omega)}+(k_3-\eta)\|\nabla(m_n-m)\|^2_{L^2(\Omega)}
\nonumber\\&&
\leq C_1\|\nabla(\tilde{m}_n-\tilde{m})\|^2_{L^2(\Omega)}
+C_2\|\nabla(\tilde{\varepsilon}_n-\tilde{\varepsilon})\|^2_{L^2(\Omega)}+c_\eta c_3\|\tilde{\varepsilon}_n-\tilde{\varepsilon}\|_{L^2(\Omega)}^2+c_\eta(c_2+c_4)\|\tilde{m}_n-\tilde{m}\|_{L^2(\Omega)}^2
\nonumber\\&&+\eta\int_\Omega(\varepsilon_n-\varepsilon)^2dx+\eta\int_\Omega(m_n-m)^2dx.
\end{eqnarray}
We make use again of the Poincar\'{e} inequality, so that we finally get
\begin{eqnarray}
&&\frac{1}{2}\frac{d}{dt}\left(\|\varepsilon_n-\varepsilon\|^2_{L^2(\Omega)}+
\|m_n-m\|^2_{L^2(\Omega)}\right)
+C_3\|\nabla(\varepsilon_n-\varepsilon)\|^2_{L^2(\Omega)}+C_4\|\nabla(m_n-m)\|^2_{L^2(\Omega)}
\nonumber\\&&
\leq C_1\|\nabla(\tilde{m}_n-\tilde{m})\|^2_{L^2(\Omega)}
+C_2\|\nabla(\tilde{\varepsilon}_n-\tilde{\varepsilon})\|^2_{L^2(\Omega)}+\eta c_3\|\tilde{\varepsilon}_n-\tilde{\varepsilon}\|_{L^2(\Omega)}^2+\eta(c_2+c_4)\|\tilde{m}_n-\tilde{m}\|_{L^2(\Omega)}^2.
\nonumber\\
\label{quasifinalecontinuity}
\end{eqnarray}
where $C_3:=k_1-\eta-\eta Cc_3,\,C_4:=k_3-\eta-C\eta(c_2+c_4)$ and $C$ is the costant of the Poincar\'{e} inequality. From \eqref{quasifinalecontinuity}, we obtain (in a compact form)
\begin{eqnarray}
\|(\varepsilon_n,m_n,\zeta_n)-(\varepsilon,m,\zeta)\|^2_X\leq c\left(\|(\tilde{\varepsilon}_n,\tilde{m}_n,\tilde{\zeta}_n)-(\tilde{\varepsilon},\tilde{m},\tilde{\zeta})\|_X^2\right).
\end{eqnarray}
Thus the continuity of $\mathcal{G}$ is proven.\\

Let us now prove that $\mathcal{G}:X\times[0,1]\rightarrow X$ is compact. By the estimates \eqref{energyestimateepsilon} and \eqref{energyestimatem}, $\mathcal{G}$ maps bounded sets from $X\times[0,1]$ into bounded sets of $V_2^{2,1}(\Omega\times S)$. Since the embedding $V_2^{2,1}(\Omega\times S)$ into $L^2(S;V)$ is compact (compare \eqref{imbedding1}, cf. Aubin's Lemma) then $\mathcal{G}:X\times[0,1]\rightarrow X$ is also compact.\\
Now, for $\zeta=0$ we have $\mathcal{G}(\tilde{\varepsilon},\tilde{m},0)=(0,0)$ for all $(\tilde{\varepsilon},\tilde{m})\in X$. The esimates \eqref{energyestimateepsilon} and \eqref{energyestimatem} imply that the solution of the problem $(\varepsilon,m)=\mathcal{G}(\varepsilon,m,\zeta)$, for some $\zeta\in[0,1]$ is uniformly bounded in $X$. The existence of at least one fixed point to $\mathcal{G}$, i.e. $(\varepsilon_\delta,m_\delta)\in X$ with $\,\mathcal{G}(\varepsilon_\delta,m_\delta,1)=(\varepsilon_\delta,m_\delta)$, follows by the Leray--Schauder Theorem. Consequently, the equation $(\varepsilon_\delta,m_\delta)=\mathcal{G}(\varepsilon_\delta,m_\delta,1)\in V_2^{2,1}(\Omega\times S)$ has a solution to Problem $(P^\delta)$, and so,  Theorem \ref{strongsolutionpdelta} is therefore proven.

\end{proof}

\subsection{Passage to the limit $\delta\rightarrow 0$}

To complete the proof of the main result stated in Theorem \ref{main},  we pass now to the limit for $\delta\rightarrow 0$. In other words, we  study the weak convergence of the solution $(\varepsilon_\delta,m_\delta)$ to problem $(P^\delta)$ to the solution $(\varepsilon,m)$ to problem $(P)$.\\

We write down problem $(P^\delta)$ in the form
\begin{align}
&\frac{\partial\varepsilon_\delta}{\partial t}+\mbox{div}(-k_1\nabla\varepsilon_\delta-k_2\nabla^\delta m_\delta)=\hat{f}_1(m_\delta,\varepsilon_\delta)&\mbox{in}\,\Omega\times S,
\label{deltamolliproblemadipartenzaconepsilonpunto}
\\
&\frac{\partial m_\delta}{\partial t}+\mbox{div}(-k_2\nabla^\delta\varepsilon_\delta-k_3\nabla m_\delta)=\hat{f}_2(m_\delta,\varepsilon_\delta)&\mbox{in}\,\Omega\times S,
\label{2deltamolliproblemadipartenzaconepsilonpunto}
\\
&\varepsilon_\delta(0)=\varepsilon_0&\mbox{in}\,\Omega,
\label{3deltamolliproblemadipartenzaconepsilonpunto}
\\
& m_\delta(0)=m_0&\mbox{in}\,\Omega,
\label{4deltamolliproblemadipartenzaconepsilonpunto}
\\
&\varepsilon_\delta(l_1,t)=\varepsilon_D(t)&\mbox{in}\,S,\label{5deltamolliproblemadipartenzaconepsilonpunto}
\\
&m_\delta(l_1,t)=m_D(t)&\mbox{in}\,S,
\label{6deltamolliproblemadipartenzaconepsilonpunto}
\\
&\frac{\partial \varepsilon_\delta}{\partial x}(l_2,t)=0&\mbox{in}\,S,
\label{7deltamolliproblemadipartenzaconepsilonpunto}
\\
&\frac{\partial m_\delta}{\partial x}(l_2,t)=0&\mbox{in}\,S.
\label{8deltamolliproblemadipartenzaconepsilonpunto}
\end{align}
The next Lemma recapitulates the basic convergences we rely on.
\begin{Lemma}
Under the assumptions $H_1$--$H_4$, the following convergences 
hold up to subsequences, as $\delta\rightarrow 0$:
\begin{enumerate}
\item[(i)] $\varepsilon_\delta\rightharpoonup\varepsilon$ in $L^2(S;V)$,\; $m_\delta\rightharpoonup m$ in $L^2(S;V)$.
\item[(ii)] $\partial_t\varepsilon_\delta\rightharpoonup\partial_t\varepsilon$ in $L^2(S;L^2(\Omega))$,\; $\partial_t m_\delta\rightharpoonup\partial_t m$ in $L^2(S;L^2(\Omega))$.
\item[(iii)] $\varepsilon_\delta\rightarrow\varepsilon$ in $L^2(S;L^2(\Omega))$,\; $ m_\delta\rightarrow m$ in $L^2(S;L^2(\Omega))$.
\item[(iv)] $\hat{f}_1(\varepsilon_\delta,m_\delta)\rightarrow\hat{f}_1(\varepsilon,m)$ a.\,e.\,in\,$\Omega\times S$,\; $\hat{f}_2(\varepsilon_\delta,m_\delta)\rightarrow\hat{f}_2(\varepsilon,m)$ a.\,e.\,in\,$\Omega\times S$.
\item[(v)]  $\nabla^\delta\varepsilon_\delta\rightarrow\nabla\varepsilon$ in $L^2(S;L^2(\Omega))$,\; $\nabla^\delta m_\delta\rightarrow\nabla m$ in $L^2(S;L^2(\Omega))$.
\item[(vi)] $\varepsilon_\delta\overset{*}{\rightharpoonup}\varepsilon$ in $L^\infty(S;L^2(\Omega))$,\; $ m_\delta\overset{*}{\rightharpoonup}m$ in $L^\infty(S;L^2(\Omega))$.
\item[(vii)] If $m_\delta,\varepsilon_\delta\in V_2^{2+\nu,1}(\Omega\times S)$ for $\nu>0$, then $\varepsilon_\delta\overset{*}{\rightharpoonup}\varepsilon$ in $L^\infty(S;L^\infty(\Omega))$,\; $ m_\delta\overset{*}{\rightharpoonup}m$ in $L^\infty(S;L^\infty(\Omega))$.
\end{enumerate}
\label{convergences}
\end{Lemma}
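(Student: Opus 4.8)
The plan is to read off every convergence from the uniform-in-$\delta$ bounds \eqref{energyestimateepsilon}, \eqref{energyestimatem}, \eqref{energyestimateinftyepsilon1}, \eqref{energyestimateinftym1}, \eqref{stimaepsilot2}, \eqref{stimamt2} already established for the strong solution $(\varepsilon_\delta,m_\delta)$, combined with soft functional-analytic tools and the mollifier properties of Theorem \ref{propmoll}. I would dispatch the ``soft'' items first. Since $L^2(S;V)$ and $L^2(S;L^2(\Omega))$ are reflexive, the bounds \eqref{energyestimateepsilon}--\eqref{energyestimatem} and \eqref{stimaepsilot2}--\eqref{stimamt2} give, by Banach--Alaoglu, weakly convergent subsequences, which is precisely (i) and (ii); similarly, writing $L^\infty(S;L^2(\Omega))=(L^1(S;L^2(\Omega)))^*$ (legitimate because $L^2(\Omega)$ is reflexive), the bounds \eqref{energyestimateinftyepsilon1}--\eqref{energyestimateinftym1} yield weak-$*$ convergent subsequences, i.e.\ (vi). At each extraction I would pass to a common subsequence and identify every limit with the same pair $(\varepsilon,m)$ by uniqueness of the distributional limit.

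For (iii) I would invoke Aubin's compactness Theorem \ref{aubin} with $B_0=V$ and $B=B_1=L^2(\Omega)$: in one space dimension $V\hookrightarrow\hookrightarrow L^2(\Omega)$ by Rellich's theorem, the embedding $L^2(\Omega)\hookrightarrow L^2(\Omega)$ is trivial, and $(\varepsilon_\delta)$, $(m_\delta)$ are bounded in the space $W$ of \eqref{spazioW} by (i)--(ii); hence they are precompact in $L^2(S;L^2(\Omega))$, giving (iii). Note that one cannot upgrade this to $L^2(S;H^1(\Omega))$ through \eqref{imbedding1}, since a uniform second-order bound would involve the mollified cross term and hence the constant $c^\delta\to\infty$. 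Item (iv) then follows by extracting from (iii) a further subsequence with $\varepsilon_\delta\to\varepsilon$ and $m_\delta\to m$ pointwise a.e.\ in $\Omega\times S$ and composing with the truncated polynomials $\hat f_1,\hat f_2$, which are continuous off the level sets $\{|\varepsilon|=M_1\}\cup\{|m|=M_2\}$ (negligible for the limit pair, given $H_3$); the composition of an a.e.-continuous function with an a.e.-convergent sequence then converges a.e.

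The delicate point, and the one I expect to be the main obstacle, is (v). Writing the mollified gradient as $\nabla^\delta u=J_\delta*\nabla u$ for $u\in V_2^{2,1}(\Omega\times S)$ and using that $J_\delta$ is even, for any $g\in L^2(S;L^2(\Omega))$ one has $\int_S(\nabla^\delta m_\delta,g)_{L^2(\Omega)}\,ds=\int_S(\nabla m_\delta,J_\delta*g)_{L^2(\Omega)}\,ds$; since $\nabla m_\delta\rightharpoonup\nabla m$ by (i) and $J_\delta*g\to g$ strongly by Theorem \ref{propmoll}, the product of a weakly and a strongly convergent sequence passes to the limit and gives $\nabla^\delta m_\delta\rightharpoonup\nabla m$ in $L^2(S;L^2(\Omega))$ (and likewise with $\varepsilon_\delta$). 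This \emph{weak} convergence is in fact all that is needed to pass to the limit in the cross-diffusion terms $-k_2(\nabla^\delta m_\delta,\nabla\varphi)_{L^2(\Omega)}$ of the weak formulation. Upgrading it to the \emph{strong} convergence literally asserted in (v) is the genuine difficulty: the naive estimate $\|\nabla^\delta(\varepsilon_\delta-\varepsilon)\|_{L^2(\Omega)}\le c^\delta\|\varepsilon_\delta-\varepsilon\|_{L^2(\Omega)}$ coming from \eqref{propnabladelta} is useless because $c^\delta\to\infty$, so the strong $L^2(S;L^2(\Omega))$ convergence (iii) does not close the gap. A genuine proof of the strong statement would require either a quantitative rate $\|\varepsilon_\delta-\varepsilon\|_{L^2(\Omega)}=o(1/c^\delta)$ or a convergence-of-energy (Radon--Riesz) argument, the latter being obstructed precisely by the cross term $(\nabla^\delta m_\delta,\nabla\varepsilon_\delta)_{L^2(\Omega)}$, a product of two only weakly convergent gradients.

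Finally, for (vii) I would use that the additional hypothesis $\varepsilon_\delta,m_\delta\in V_2^{2+\nu,1}(\Omega\times S)$ provides, via the corresponding parabolic embedding into H\"older spaces, a uniform-in-$\delta$ bound in $L^\infty(S;L^\infty(\Omega))$; writing $L^\infty(S;L^\infty(\Omega))=L^\infty(\Omega\times S)=(L^1(\Omega\times S))^*$, Banach--Alaoglu then yields a weak-$*$ convergent subsequence whose limit is identified with the $(\varepsilon,m)$ of (iii). Throughout, the only real analytical content beyond bookkeeping with weak and weak-$*$ compactness is the handling of the mollified gradient in (v).
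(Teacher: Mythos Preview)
Your approach is essentially that of the paper: items (i), (ii), (vi) are read off from the uniform bounds via weak/weak-$*$ compactness; (iii) comes from Aubin's lemma with $B_0=V$, $B=B_1=L^2(\Omega)$; (iv) is deduced from (iii) by passing to an a.e.\ convergent subsequence; and (vii) uses the embedding $H^{2+\nu}(\Omega)\hookrightarrow L^\infty(\Omega)$ to obtain a uniform $L^\infty(\Omega\times S)$ bound. On these points there is nothing to add.

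The one place where you are more careful than the paper is (v). The paper disposes of (v) in a single line, invoking only the mollifier properties of Theorem~\ref{propmoll}; it does not spell out how to obtain the \emph{strong} $L^2(S;L^2(\Omega))$ convergence of $\nabla^\delta\varepsilon_\delta$ from the available bounds. Your observation that the splitting $\nabla^\delta\varepsilon_\delta-\nabla\varepsilon=(J_\delta*\nabla(\varepsilon_\delta-\varepsilon))+(J_\delta*\nabla\varepsilon-\nabla\varepsilon)$ only controls the second piece strongly, while the first is bounded by $\|\nabla(\varepsilon_\delta-\varepsilon)\|_{L^2}$ which is not known to vanish, is well taken. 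Your substitute argument---duality $\int_S(\nabla^\delta m_\delta,g)=\int_S(\nabla m_\delta,J_\delta*g)$, then weak $\times$ strong---cleanly gives $\nabla^\delta m_\delta\rightharpoonup\nabla m$, and you are right that this weak convergence is exactly what is needed to pass to the limit in the cross terms $-k_2(\nabla^\delta m_\delta,\nabla\varphi)_{L^2(\Omega)}$ of \eqref{deltamustgotozero1}--\eqref{deltamustgotozero2}, since $\nabla\varphi$ is a fixed $L^2$ function. So your proof of the lemma is complete for the purposes of Theorem~\ref{main}, and arguably more honest about (v) than the paper's own proof.
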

\begin{proof}
$(i)$ simply follows from the estimates \eqref{energyestimateepsilon} and \eqref{energyestimatem}, while $(ii)$ is a direct consequence of the estimates \eqref{stimaepsilot2} and \eqref{stimamt2}. To deal with $(iii)$, we make use of the Aubin's compactness  lemma (see Theorem \ref{aubin}), particularly by choosing 
$$B_0:=V,\;B=L^2(\Omega)\;B_1:=L^2(\Omega).$$
Now, defining $W$ (see \eqref{spazioW}) as 
$$W:=\left\lbrace\varphi\in L^2(S;V),\;\partial_t\varphi\in L^2(S;L^2(\Omega))\right\rbrace,$$
we get
$$W\hookrightarrow\hookrightarrow L^2(S;L^2(\Omega)).$$ 
$(iv)$ simply follows from $(iii)$. $(v)$ is a consequence of Theorem \ref{propmoll}. $(vi)$ follows from the estimates \eqref{energyestimateinftyepsilon1} and \eqref{energyestimateinftym1}. To deal with $(vii)$, we note that $V_2^{2+\nu,1}\hookrightarrow L^\infty(\Omega\times S)$. In fact, we have $H^{2+\nu}(\Omega)\hookrightarrow L^\infty(\Omega),\,\nu>0$, see \cite{fucik} Theorem 5.7.8 p. 287, and hence, we deduce
\begin{eqnarray}
\label{energyestimateinftyepsilon2}
\|\varepsilon_\delta\|_{L^\infty(S;L^\infty(\Omega))}\leq C,\\
\|m_\delta\|_{L^\infty(S;L^\infty(\Omega))}\leq C.
\label{energyestimateinftym2}
\end{eqnarray} 
From \eqref{energyestimateinftyepsilon2} and \eqref{energyestimateinftym2} we obtain $(vii)$.
\end{proof}
It is worth noting that the strong solution  
$\tilde{\varepsilon}\in \zeta\varepsilon_D+V_2^{2,1}(\Omega\times S),\,m\in\zeta m_D+V_2^{2,1}(\Omega\times S)$ ensured by Theorem \ref{strongsolutionpdelta} is a solution of problem $(P^\delta)$ and satisfies the identities
\begin{eqnarray}
&&-\int_\Omega\varepsilon_\delta(x,0)\varphi(x,0)-\int_{\Omega\times S}\varepsilon_\delta\partial_t\varphi dxdt+k_1\int_{\Omega\times S}\nabla\varepsilon_\delta\nabla\varphi dxdt
\nonumber\\
&&+k_2\int_{\Omega\times S}\nabla^\delta m_\delta\nabla\varphi dxdt=\int_{\Omega\times S}\hat{f}_1(\varepsilon_\delta,m_\delta)\varphi dxdt,
\label{deltamustgotozero1}
\end{eqnarray}
\begin{eqnarray}
&&-\int_\Omega m_\delta(x,0)\psi(x,0)-\int_{\Omega\times S}m_\delta\partial_t\psi dxdt+k_3\int_{\Omega\times S}\nabla m_\delta\nabla\psi dxdt
\nonumber\\
&&+k_2\int_{\Omega\times S}\nabla^\delta\varepsilon_\delta\nabla\psi dxdt=\int_{\Omega\times S}\hat{f}_2(\varepsilon_\delta,m_\delta)\psi dxdt
\label{deltamustgotozero2}
\end{eqnarray}
for all test functions $\varphi,\,\psi\in C(\bar{S};C_0(\bar{\Omega}))$ and $\varphi(x,T)=\psi(x,T)=0\;\mbox{for\,all}\,x\in\Omega$.
\\
The convergences $(i)$--$(v)$ established in Lemma \ref{convergences} are sufficient for taking the weak limit $\delta\rightarrow 0$ in \eqref{deltamustgotozero1} and \eqref{deltamustgotozero2}. Thus, we have
\begin{eqnarray}
&&-\int_\Omega\varepsilon(x,0)\varphi(x,0)-\int_{\Omega\times S}\varepsilon\partial_t\varphi dxdt+k_1\int_{\Omega\times S}\nabla\varepsilon\nabla\varphi dxdt
\nonumber\\
&&+k_2\int_{\Omega\times S}\nabla^\delta m\nabla\varphi dxdt=\int_{\Omega\times S}\hat{f}_1(\varepsilon,m)\varphi dxdt,
\label{deltagotozero1}
\end{eqnarray}
\begin{eqnarray}
&&-\int_\Omega m(x,0)\psi(x,0)-\int_{\Omega\times S}m\partial_t\psi dxdt+k_3\int_{\Omega\times S}\nabla m\nabla\psi dxdt
\nonumber\\
&&+k_2\int_{\Omega\times S}\nabla^\delta\varepsilon\nabla\psi dxdt=\int_{\Omega\times S}\hat{f}_2(\varepsilon,m)\psi dxdt
\label{deltagotozero2}
\end{eqnarray} 
for all test functions $\varphi$ and $\psi$ previously chosen.\\
Now, integrating back \eqref{deltagotozero1} and \eqref{deltagotozero2}, we obtain:
\begin{eqnarray}
(\partial_t\varepsilon,\varphi)_{L^2(\Omega)}+k_1(\nabla\varepsilon,\nabla\varphi)_{L^2(\Omega)}=-k_2(\nabla m,\nabla\varphi)_{L^2(\Omega)}+(\hat{f}_1,\varphi)_{L^2(\Omega)},
\end{eqnarray}
\begin{eqnarray}
(\partial_t m,\psi)_{L^2(\Omega)}+k_3(\nabla m),\nabla\psi)_{L^2(\Omega)}=-k_2(\nabla\varepsilon,\nabla\psi)_{L^2(\Omega)}+(\hat{f}_2,\psi)_{L^2(\Omega)},
\end{eqnarray}
which is precisely our concept of weak solution to Problem $(P)$, see Defintion \ref{weaksol}. This completes the proof of the main result Theorem \ref{main}.

\section{Proof of Theorem \ref{uniqueness}}
Let us suppose that problem $(P)$
has two different solutions $(\varepsilon_1,m_1)$ and $(\varepsilon_2,m_2)$ endowed with the same initial conditions. Define $w_\varepsilon:=\varepsilon_1-\varepsilon_2$ and $w_m:=m_1-m_2$, then $(w_\varepsilon,w_m)$ satisfy, for all $\varphi,\psi\in V$, the following identities:
\begin{eqnarray}
(\partial_t w_\varepsilon,\varphi)_{L^2(\Omega)}+(\mbox{div}(-k_1\nabla w_\varepsilon),\varphi)_{L^2(\Omega)}+(\mbox{div}(k_2\nabla w_m),\varphi)_{L^2(\Omega)}
\nonumber\\=(\hat{f}_1(\varepsilon_1,m_1)-\hat{f}_1(\varepsilon_2,m_2),\varphi)_{L^2(\Omega)},
\label{uniq1phi}
\end{eqnarray}
and
\begin{eqnarray}
(\partial_t w_m,\psi)_{L^2(\Omega)}+(\mbox{div}(-k_3\nabla w_m),\psi)_{L^2(\Omega)}+(\mbox{div}(k_2\nabla w_\varepsilon),\psi)_{L^2(\Omega)}
\nonumber\\=(\hat{f}_2(\varepsilon_1,m_1)-\hat{f}_2(\varepsilon_2,m_2),\psi)_{L^2(\Omega)}.
\label{uniq1psi}
\end{eqnarray}
We choose $\varphi:=w_\varepsilon\in V,\,\psi:=w_m\in V$, then we obtain
\begin{eqnarray}
\frac{1}{2}\frac{d}{dt}\|w_\varepsilon\|^2_{L^2(\Omega)} +k_1\|\nabla w_\varepsilon\|^2_{L^2(\Omega)}=-k_2\int_\Omega\nabla w_\varepsilon\nabla w_m dx+\int_\Omega(\hat{f}_1(\varepsilon_1,m_1)-\hat{f}_1(\varepsilon_2,m_2))w_\varepsilon dx,
\nonumber\\
\end{eqnarray}
and hence,
\begin{eqnarray}
\frac{1}{2}\frac{d}{dt}\|w_m\|^2_{L^2(\Omega)} +k_1\|\nabla w_m\|^2_{L^2(\Omega)}=-k_2\int_\Omega\nabla w_\varepsilon\nabla w_m dx
\nonumber\\+\int_\Omega(\hat{f}_2(\varepsilon_1,m_1)-\hat{f}_2(\varepsilon_2,m_2))w_m dx.
\end{eqnarray}
Applying the geometric mean--aritmetic mean inequality to the first term of the right hand side of both the above equations, we obtain 
\begin{eqnarray}
\frac{1}{2}\frac{d}{dt}\|w_\varepsilon\|^2_{L^2(\Omega)} +k_1\|\nabla w_\varepsilon\|^2_{L^2(\Omega)}\leq \frac{k_2}{2}\|\nabla w_\varepsilon\|^2_{L^2(\Omega)} +\frac{k_2}{2}\|\nabla w_m\|^2_{L^2(\Omega)} 
\nonumber\\+\int_\Omega(\hat{f}_1(\varepsilon_1,m_1)-\hat{f}_1(\varepsilon_2,m_2))w_\varepsilon dx,
\label{1dasumuniq}
\end{eqnarray}
\begin{eqnarray}
\frac{1}{2}\frac{d}{dt}\|w_m\|^2_{L^2(\Omega)}+k_3\|\nabla w_m\|^2_{L^2(\Omega)}\leq \frac{k_2}{2}\|\nabla w_\varepsilon\|^2_{L^2(\Omega)} +\frac{k_2}{2}\|\nabla w_m\|^2_{L^2(\Omega)}
\nonumber\\+ \int_\Omega(\hat{f}_2(\varepsilon_1,m_1)-\hat{f}_2(\varepsilon_2,m_2))w_m dx.
\label{2dasumuniq}
\end{eqnarray}
We observe that, due to the particular structure of $\hat{f}_1$ and $\hat{f}_2$ from \eqref{f1cappello}--\eqref{f1f2}, there exist the constants $A_1,\,B_1,\,A_2,\,B_2>0$ such that
\begin{eqnarray}
\left|\hat{f}_1(\varepsilon_1,m_1)-\hat{f}_1(\varepsilon_2,m_2)\right|
\leq A_1|w_\varepsilon|+B_1|w_m|,
\label{1uniqf1}
\end{eqnarray}
\begin{eqnarray}
\left|\hat{f}_2(\varepsilon_1,m_1)-\hat{f}_2(\varepsilon_2,m_2)\right|
\leq A_2|w_\varepsilon|+B_2|w_m|.
\label{2uniqf2}
\end{eqnarray}
Now, we sum up \eqref{1dasumuniq} and \eqref{2dasumuniq} and we use \eqref{1uniqf1}-\eqref{2uniqf2} to get:
\begin{eqnarray}
&&\frac{1}{2}\frac{d}{dt}\|w_\varepsilon\|^2_{L^2(\Omega)}+\frac{1}{2}\frac{d}{dt}\|w_m\|^2_{L^2(\Omega)}+(k_1-k_2)\|\nabla w_\varepsilon\|^2_{L^2(\Omega)}+(k_3-k_2)\|\nabla w_m\|^2_{L^2(\Omega)}
\nonumber\\
&&\leq A\|w_\varepsilon\|^2_{L^2(\Omega)}+B\|w_m\|^2_{L^2(\Omega)}.
\end{eqnarray}
Using assumption $H_5$, we have 
\begin{eqnarray}
&&\frac{1}{2}\frac{d}{dt}\|w_\varepsilon\|^2_{L^2(\Omega)}+\frac{1}{2}\frac{d}{dt}\|w_m\|^2_{L^2(\Omega)}\leq K(\|w_\varepsilon\|^2_{L^2(\Omega)}+\|w_m\|^2_{L^2(\Omega)}),
\end{eqnarray}
where $K=\mbox{max}\left\lbrace A,B\right\rbrace$. Using now Gronwall's inequality, we get for all $t\in S$
\begin{eqnarray}
0\leq \|w_\varepsilon(t)\|^2_{L^2(\Omega)}+\|w_m(t)\|^2_{L^2(\Omega)}\leq e^{2Kt}\left(\|w_\varepsilon(0)\|^2_{L^2(\Omega)}+\|w_m(0)\|^2_{L^2(\Omega)}\right),
\end{eqnarray}
where $w_\varepsilon(0)=w_\varepsilon(0)=0$. Thus
$\varepsilon_1=\varepsilon_2,\,m_1=m_2$ a.e. in $\Omega$ for all $t\in S$.



\section{Negativity and boundedness of the strain and density:
proof of Theorem \ref{boundandnegative}}
\label{sec:negativity}
Due to the cross--diffusion--like structure the Problem $(P)$ does not admit a weak maximum principle. Also, we were not able to find suitable test functions to obtain the boundedness or the negativity\footnote{It is worth noting that testing with $\varphi=\varepsilon^+\in V$ and $\psi=m^+\in V$ (or $\varphi=(\varepsilon-M_1)^+$ and $\psi=(m-M_2)^+$ to search for boundedness) does not work. This is mainly because we can not control the sign of terms like $k_2\nabla\varepsilon^-\nabla m^+$ or $k_2\nabla^\delta\varepsilon^-\nabla m^+$.
} by means of an energy--like argument. In what follows, we rely on a regularity argument to ensure the boundedness property and use finite difference scheme to detect the negativity of the solution. 
 
The boundedness of the solution $(\varepsilon,m)$ to Problem $(P)$ is obtained based on the additional regularity indicated in Lemma \ref{convergences} $(vi)$ (together with $H_6$).

To prove the negativity of $(\varepsilon,m)$ we proceed as follows: 
Due to the regularity of the solution, in one--space dimension the functions $\varepsilon$ and $m$ are continuous. We then use the approach (and the corresponding notation)  from \cite{saito} to construct using finite difference approximations a non--positive subsequence $(\varepsilon^n,m^n)$ convergent to $(\varepsilon,m)$, situation which is valid provided the negativity of the functions $f_1,f_2$ is guaranteed.
\\
We rewrite Problem $(P)$ in the following way:

\begin{align}
&\partial_t m=(k_3\partial_x m+k_2\partial_x\varepsilon)_x+\hat{f}_2(\varepsilon,m) &\mbox{in}\,\Omega\times S,
\label{perdiffscheme}\\
&\partial_t\varepsilon=k_1( \partial_x\varepsilon+k_2\partial_x m)_x+\hat{f}_1(\varepsilon,m)&\mbox{in}\,\Omega\times S,
\label{1perdiffscheme}\\
&\varepsilon(x,0)=\varepsilon_0(x)&\mbox{in}\,\Omega,\\
&m(x,0)=m_0(x)&\mbox{in}\,\Omega,\\
&\varepsilon(l_1,t)=\varepsilon_D&\mbox{in}\,S,\\
&m(l_1,t)=m_D&\mbox{in}\,S,\\
&\partial_x \varepsilon(l_2,t)=0&\mbox{in}\,S,
\\&\partial_x m(l_2,t)=0&\mbox{in}\,S,
\end{align}
where we have $\hat{f}_1,\,\hat{f}_2\leq 0$.\\
Take now a positive integer $N$ and let $h:=1/N$. We introduce two types of grid points as
\begin{eqnarray}
x_i:=(i-\frac{1}{2})h,\,i\in\left\lbrace 0,\dots,N+1\right\rbrace\; \mbox{and}\quad \hat{x}_i=ih,\,i\in\left\lbrace -1,\dots,N+1\right\rbrace\,
\end{eqnarray}
and set sub--intervals
\begin{eqnarray}
I_i:=(\hat{x}_{i-1},\hat{x}_i),\,i\in\left\lbrace 1,\dots,N\right\rbrace\;\mbox{and}
\quad 
\hat{I}_i:=(x_i,x_{i+1}),\,i\in\left\lbrace 0,\dots,N+\right\rbrace.
\end{eqnarray}
Furthermore, we set
\begin{eqnarray}
X_h:=\left\lbrace\sum_{i=1}^Nc_i\chi_i,\;\left\lbrace c_i\right\rbrace_{i=1}^N\subset\mathbb{R}\right\rbrace\;\mbox{and}\quad \hat{X}_h:=\left\lbrace\sum_{i=0}^N\hat{c}_i\hat{\chi}_i,\;\left\lbrace \hat{c}_i\right\rbrace_{i=0}^N\subset\mathbb{R}\right\rbrace,
\end{eqnarray}
where $\chi_i$ and $\hat{\chi}_i$ denote the characteristic functions of $I_i$ and $\hat{I}_i\cap[0,1]$ respectively.\\
Let $\left\lbrace\tau_n\right\rbrace_{i=1}^\sigma$ be a set of positive numbers and suppose that the $n$--th time step $t_n$ is determined by
\begin{eqnarray}
t_0=0,\qquad t_n=t_{n-1}+\tau_n=\sum_{k=1}^n\tau_k,\,i\in\left\lbrace 1,\dots,\sigma\right\rbrace\,\quad t_m\leq T.
\end{eqnarray}
With such notation available, we find the unknown functions in the following form:
\begin{eqnarray}
m^n_h:=\sum^N_{i=1}m_i^n\chi_i\approx m(x,t_n)\\
b^n_h:=\sum^N_{i=1}b_i^n\chi_i\approx \varepsilon_x(x,t_n)\\
F^n_h:=\sum^N_{i=0}F_i^n\hat{\chi}_i\approx F^n\equiv(k_3\partial_x m+k_2\partial_x\varepsilon)\\
\varepsilon^n_h:=\sum^N_{i=0}\varepsilon_i^n\hat{\chi}_i\approx \varepsilon(x,t_n)
\end{eqnarray}
for $n=\left\lbrace 0,\dots,\sigma\right\rbrace$.\\
Firstly, we introduce a difference scheme for equation \eqref{1perdiffscheme}, viz.
\begin{eqnarray}
\label{due}
\frac{\varepsilon_i^n-\varepsilon_i^{n-1}}{\tau_n}=k_1\frac{\varepsilon_{i-1}^{n-1}-2\varepsilon_i^{n-1}+\varepsilon_{i+1}^{n-1}}{h^2}+k_2\frac{m_{i-2}^{n-1}-2m_{i-1}^{n-1}+m_i^{n-1}}{h^2}+\hat{f}_1(\varepsilon
_i^{n-1},m_i^{n-1})\\\nonumber
i=3,\dots, N\quad\mbox{and}\; \varepsilon_{-1}=\varepsilon_1,\,\varepsilon_{N+1}=\varepsilon_{N-1},
\end{eqnarray}
with $\varepsilon_0=\varepsilon_D<0,\,\varepsilon_1<0$ prescribed.\\
Now we describe a difference scheme for \eqref{perdiffscheme}. We suppose that $m_h^{n-1}$ is known from the time step $t_{n-1}$. Then $\varepsilon_h^n$ can be calculated by \eqref{due}. \\
We compute $b_h^{n-1}$ by
\begin{eqnarray}
b_i^{n}=\frac{\varepsilon_i^n-\varepsilon^n_{i-1}}{h},\qquad i\in\left\lbrace 0,\dots,N+1\right\rbrace.
\end{eqnarray} 
Then we approximate the flux $F_i^n$ is by
\begin{eqnarray}
F_i^n=k_3\frac{m_{i+1}^n-m_i^n}{h}+k_2b_i^n,\qquad i=1,\dots,N-1\\
F_0^n=0,\qquad  F_0^N=0.
\label{bcscheme}
\end{eqnarray}
Our proposed scheme reads then as follows:
\begin{eqnarray}
\frac{m_i^n-m_i^{n-1}}{\tau_n}=\theta\frac{F_i^n-F_{i-1}^n}{h}+(1-\theta)\frac{F_i^{n-1}-F_{i-1}^{n-1}}{h}+\hat{f}_2(\varepsilon^{n-1}_i,m^{n-1}_i),
\\\nonumber i=1,\dots,N,
\label{scheme}
\end{eqnarray}
with the boundary condition \eqref{bcscheme}, $\theta\in[0,1]$ and $m_0=m_D$ prescribed.\\
Now we introduce the matrix representation of \eqref{scheme} and \eqref{bcscheme}. To this aim, setting $\lambda_n=\tau_n/h^2$, we define the $N\times N$ matrix $\textbf{H}=[H_{k,l}]$ by
\begin{eqnarray}
H_{k,l}:=k_3\cdot
\begin{cases}
-1,\qquad k=l=1,\\
1\qquad k=1,\,l=2,\\
1\qquad 2\leq k\leq N-1,\,l=k-1,\\
-2\qquad 2\leq k\leq N-1,\,l=k\\
1\qquad 2\leq k\leq N-1,\,l=k+1,\\
1\qquad k=N,\,l=N-1,\\
-1\qquad k=N,\,l=N,\\
0\qquad \mbox{otherwise},
\end{cases}
\end{eqnarray}
and the $N\times N$ matrix $\textbf{B}_n=[B_{k,l}]$ by
\begin{eqnarray}
B_{k,l}:=hk_2\cdot
\begin{cases}
-b_1,\qquad k=l=1,\\
b_2\qquad k=1,\,l=2,\\
0\qquad 2\leq k\leq N-1,\,l=k-1,\\
-b_k\qquad 2\leq k\leq N-1,\,l=k\\
b_{k+1}\qquad 2\leq k\leq N-1,\,l=k+1,\\
b_N\qquad k=N,\,l=N-1,\\
B_N\qquad k=N,\,l=N,\\
0\qquad \mbox{otherwise}.
\end{cases}
\end{eqnarray}
Then \eqref{scheme} and \eqref{bcscheme} is reduced to
\begin{eqnarray}
(\textbf{I}-\lambda_n\theta\textbf{H})\textbf{m}^n-
\textbf{B}_n\textbf{1}=(\textbf{I}+\lambda_n(1-\theta)\textbf{H})\textbf{m}^{n-1}+
\textbf{B}_{n-1}\textbf{1}+\hat{f}_2\textbf{I}\textbf{1}, \qquad n=\left\lbrace 1,2,\dots,\sigma\right\rbrace,
\label{scrittoamatrix}
\end{eqnarray}
where $\textbf{m}^n=^T(m_1^n,\dots,m_N^n)$, $\textbf{I}$ denotes the identity matrix and $\textbf{1}=^T(1,\dots,1)$.
\\
We state now some assumptions in order to prove the follwoing Theorem \ref{thmdiff} which constructs teh sequence $(\varepsilon^n,m^n)$ needed in the proof of Theorem \ref{boundandnegative}.
\begin{itemize}
\item[$A_1$:] $2\tau_n\theta\leq h$, 
\item[$A_2$:] $2\tau_n(1-\theta)\leq h^2$,
\item[$A_3$:] $b_i^0-b_{i-1}^0+b_i^1-b_{i-1}^1\leq 0$.
\end{itemize}
$A_1$--$A_3$ are all technical assumptions.
\begin{thm}
\label{thmdiff}
Let $n\in\left\lbrace 1,\dots,\sigma\right\rbrace$ and $m_h^{n-1}=\sum_{i=1}^Nm_i^{n-1}\chi_i$, $\varepsilon_h^{n-1}=\sum_{i=0}^N\varepsilon_i^{n-1}\hat{\chi}_i$  be given. Assume that $m_h^{n-1}\leq 0$, $\varepsilon_h^{n-1}\leq 0$ and $m_h^{n-1}$, $\varepsilon_h^{n-1}$ are not identically constant. Assume $A_1$--$A_3$ and define $$\rho(\theta,h):=\min\left\lbrace\frac{h^2}{2(1-\theta)},\frac{h}{2\theta}\right\rbrace,\qquad \iota(h,k_2):=\frac{h^2}{2k_2}.$$\\
Then if 
\begin{eqnarray}
\label{condlambda}
\tau_n\leq\min\left\lbrace\rho(\theta,h),\iota(h,k_2)\right\rbrace,
\end{eqnarray}
the scheme
\eqref{scheme} and \eqref{bcscheme} together with \eqref{due} admits a unique solution $$(\varepsilon_h^n,m_h^n)=(\sum^N_{i=0}\varepsilon_i^n\hat{\chi}_i,\sum_{i=1}^Nm_i^n\chi_i)$$
which satisfies $\varepsilon_i^n<0,\mbox{and}\,m_j^n<0$ for all $i\in\left\lbrace 0,\dots,N\right\rbrace,\,j\in\left\lbrace1,\dots,N\right\rbrace$.
\end{thm}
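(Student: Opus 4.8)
The plan is to separate the two assertions—unique solvability of the one–step scheme and the strict sign of its solution—and to treat the whole argument as a discrete maximum principle built on the $M$–matrix structure behind \eqref{scrittoamatrix}. First I would note that the scheme is solved sequentially: given $\varepsilon_h^{n-1},m_h^{n-1}$, the update \eqref{due} is fully explicit, so each $\varepsilon_i^n$ is given by a closed formula and its existence and uniqueness are immediate; only afterwards, with the $b_i^n$ and hence $\textbf{B}_n$ already known, does one solve the linear system \eqref{scrittoamatrix} for $\textbf{m}^n$. Solvability of the latter reduces to invertibility of $\textbf{I}-\lambda_n\theta\textbf{H}$. I would check directly from the definition of $\textbf{H}$ that this matrix has strictly positive diagonal and nonpositive off–diagonal entries and is strictly diagonally dominant (each interior row carries diagonal $1+2\lambda_n\theta k_3$ against off–diagonal mass $2\lambda_n\theta k_3$), hence is a nonsingular $M$–matrix; being tridiagonal with nonvanishing sub– and super–diagonals it is irreducible, so its inverse is \emph{entrywise strictly positive}. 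This yields existence and uniqueness of $m_h^n$ and, crucially, the tool needed later for strict negativity.

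For the sign of $\varepsilon_h^n$ I would rewrite \eqref{due} as
\[
\varepsilon_i^n=\Bigl(1-\tfrac{2\tau_n k_1}{h^2}\Bigr)\varepsilon_i^{n-1}+\tfrac{\tau_n k_1}{h^2}\bigl(\varepsilon_{i-1}^{n-1}+\varepsilon_{i+1}^{n-1}\bigr)+\tfrac{\tau_n k_2}{h^2}\bigl(m_{i-2}^{n-1}-2m_{i-1}^{n-1}+m_i^{n-1}\bigr)+\tau_n\hat f_1(\varepsilon_i^{n-1},m_i^{n-1}),
\]
and observe that the time–step restriction \eqref{condlambda} is designed precisely so that the diffusive coefficients are nonnegative; the first two groups then combine nonpositive level–$(n-1)$ values into a nonpositive quantity, while $\hat f_1\le 0$ contributes nonpositively. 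The only sign–indefinite piece is the $k_2$ discrete second difference of $m$, and this is exactly where $\iota(h,k_2)=h^2/(2k_2)$ enters to keep its contribution dominated. Together these give $\varepsilon_i^n\le 0$ and, using the non–constancy hypothesis to exclude the degenerate case, the strict inequality $\varepsilon_i^n<0$.

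For $m_h^n$ I would invert \eqref{scrittoamatrix} to write $\textbf{m}^n=(\textbf{I}-\lambda_n\theta\textbf{H})^{-1}\bigl[(\textbf{I}+\lambda_n(1-\theta)\textbf{H})\textbf{m}^{n-1}+(\textbf{B}_{n-1}+\textbf{B}_n)\textbf{1}+\hat f_2\,\textbf{I}\,\textbf{1}\bigr]$ and show the bracket is entrywise nonpositive. The first term is nonpositive because $A_2$ forces $\textbf{I}+\lambda_n(1-\theta)\textbf{H}$ to be an entrywise nonnegative matrix acting on $\textbf{m}^{n-1}\le 0$; the second is nonpositive because its $k$–th row sum equals $hk_2\bigl[(b_{k+1}^{n-1}-b_k^{n-1})+(b_{k+1}^n-b_k^n)\bigr]$, which is exactly the combination that $A_3$ renders nonpositive (at the first step, and by induction at later levels); the third is nonpositive since $\hat f_2\le 0$. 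Multiplying a nonpositive, not identically zero vector by the strictly positive inverse then forces \emph{every} entry of $\textbf{m}^n$ to be strictly negative, completing the one–step claim.

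The main obstacle throughout is the cross–diffusion coupling through $k_2$: exactly as for the continuous problem—where the authors already note the failure of the weak maximum principle—the off–diagonal fluxes feed a discrete second difference of the \emph{other} unknown, whose sign is a priori uncontrolled, into each update. The content of the theorem is that the staggered discretization of \cite{saito}, together with the CFL–type bound \eqref{condlambda} and the structural conditions $A_1$–$A_3$, restores an $M$–matrix/monotone structure despite this coupling. The most delicate verifications are precisely that $A_3$ converts the $\textbf{B}_n$ (i.e.\ $\varepsilon_x$) contribution in the $m$–update into a nonpositive term, and that $\iota(h,k_2)$ tames the reciprocal $m$–second–difference term in the $\varepsilon$–update, so that both sign–indefinite cross terms are absorbed and the induction carrying $(\varepsilon_h^{n-1},m_h^{n-1})\le 0$ to $(\varepsilon_h^n,m_h^n)<0$ closes.
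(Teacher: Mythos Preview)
Your proposal is correct and follows essentially the same route as the paper: invertibility and entrywise strict positivity of $(\textbf{I}-\lambda_n\theta\textbf{H})^{-1}$ from its irreducibly diagonally dominant $M$-matrix structure (via $A_1$), entrywise nonnegativity of $\textbf{I}+\lambda_n(1-\theta)\textbf{H}$ (via $A_2$), an induction on the $A_3$-type inequality $b_i^{n-1}-b_{i-1}^{n-1}+b_i^n-b_{i-1}^n\le 0$ to control the $\textbf{B}$-contributions, and the sign of $\hat f_1,\hat f_2$ for the remaining terms. The only differences are organizational---you handle the explicit $\varepsilon$-update before the implicit $m$-system while the paper does the reverse, and you are slightly more explicit than the paper's ``by a direct calculation'' in flagging the $k_2$ second difference of $m$ as the sole sign-indefinite piece and pointing to $\iota(h,k_2)$ as its control.
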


\begin{proof}
By assumption $A_1$, defining $\textbf{A}:=\textbf{I}-\lambda_n\theta\textbf{H}=[A_{k,l}]$ we can observe that the matrix $A$ is irreducible and diagonally dominant. In fact the irreducibility is a consequence of
\begin{eqnarray}
A_{k,k}>0,\qquad 1\leq k\leq N,
\end{eqnarray}
\begin{eqnarray}
A_{k,k-1}<0,\qquad 2\leq k\leq N,\qquad A_{k,k+1}<0,\qquad 1\leq k\leq N-1.
\end{eqnarray}
On the other hand, by assumption $A_1$, we have
$$\sum_{l=1}^NA_{k,l}\geq 1$$
for $2\leq k\leq N-1$. In a similar way, we have
$$
\sum_{l=1}^NA_{1,l}\geq\frac{1}{2},\qquad\sum_{l=1}^NA_{N,l}\geq\frac{1}{2}.
$$
Thus ${A}$ is diagonally dominant. From asssumption $A_2$, by a direct calculation it is possible to verify that every entry of $(\textbf{I}-\lambda_n(1-\theta)\textbf{H})$ is 
nonnegative.\\
Now we write \eqref{scrittoamatrix} in the following way:
\begin{eqnarray}
(\textbf{I}-\lambda_n\theta\textbf{H})\textbf{m}^n=(\textbf{I}+\lambda_n(1-\theta)\textbf{H})\textbf{m}^{n-1}+
\textbf{B}_{n-1}\textbf{1}+\hat{f}_2\textbf{I}\textbf{1}+
\textbf{B}_n\textbf{1},
\label{scrittoamatrix2}
\end{eqnarray}
and we exploit $A_3$ and \eqref{condlambda} to verify the negativity of the right hand side of \eqref{scrittoamatrix2} and the negativity of $\varepsilon_h^n$. We procede by induction:
\begin{eqnarray}
b_i^0-b_{i-1}^0+b_i^1-b_{i-1}^1\leq 0
\end{eqnarray} is assumption $A_3$. We suppose that 
\begin{eqnarray}
b_i^{n-1}-b_{i-1}^{n-1}+b_i^n-b_{i-1}^n\leq 0
\end{eqnarray}
holds and we prove the same inequality holds also for $n$. By \eqref{scheme}, we have
\begin{eqnarray}
&&b_i^n-b_{i-1}^n+b_i^{n+1}-b_{i-1}^{n+1}=
\nonumber\\&&b_i^n-b_{i-1}^n+\frac{1}{k_2}\left(m_i^{n+1}-m_i^{n}-\tau_n\theta\frac{1}{h^2}\left[k_3(m_{i+1}^n-2m_i^n+m_{i-1}^n)\right]\right)\\
\nonumber\\
&&-\frac{1}{k_2}\left(\tau_n(1-\theta)\frac{1}{h^2}\left[k_3(m_{i+1}^{n-1}-2m_i^{n-1}+m_{i-1}^{n-1})\right]\right)-(b_i^n-b_{i-1}^n)\leq 0.
\end{eqnarray}
Morover we know that $f_2$ is negative. Thus, we have 
\begin{eqnarray}
m^n<0\quad\mbox{for\,all}\,1\leq i\leq N.
\label{negativitàm}
\end{eqnarray}
Take now $n=1$ in \eqref{due}. Using both \eqref{negativitàm} and the negativity of $\hat{f}_1$, we easily obtain $\varepsilon^0\leq 0$. Consider again \eqref{due} and suppose that $\varepsilon_{n-1}\leq 0$, again by a direct calculation it holds $\varepsilon_n\leq 0$.
\end{proof}
Theorem \ref{boundandnegative} is now proven.

\section{Numerical study of steady states of strains and fluid densities for the 
consolidation problem}
\label{s:num}
As we already mentioned above, a very interesting application of 
the theory developed (recalled) in Section~\ref{s:cons} is the study
of profile formation in porous media in a phase transition regime. We will consider a system exhibiting
two phases differing in the strain $\varepsilon$ and in the fluid content $m$. In this situation on a finite one--dimensional bar the system can show profiles, in $\varepsilon$ and $m$, connecting one phase to the other. 

We consider the following expression for the total potential 
energy density in the perspective of describing the transition between
a fluid--poor and a fluid--rich phase 
\begin{equation}
\label{sec015}
\Psi(m,\varepsilon)
 :=
 \frac{\alpha}{12}m^2(3m^2\!-8b\varepsilon m+6b^2\varepsilon^2)
 +
 \Psi_\textrm{B}(m,\varepsilon),
\end{equation}
where 
\begin{equation}
\label{sec020}
\Psi_\textrm{B}(m,\varepsilon):=
 p\varepsilon+\frac{1}{2}\varepsilon^2+\frac{1}{2}a(m-b\varepsilon)^2
\end{equation}
is the Biot potential energy density~\cite{biot01},
$a>0$ is the ratio between the fluid and the solid rigidity, 
$b>0$ is a coupling between the fluid and the solid component, 
$p>0$ is the external pressure,
and
$\alpha>0$ is a material parameter responsible for the showing 
up of an additional equilibrium.

In the papers~\cite{CIS2011,CIS2010} we have studied 
the stationary version of the problem (\ref{gd03}) 
corresponding to the potential energies (\ref{sec010}) and (\ref{sec015})
to describe the possible occurance of an interface between two phases 
differing in fluid content. In fact this model is built in such a way to 
describe the existence of two states of equilibrium: 
the fluid--poor phase $(\varepsilon_s,m_s)$ and the fluid--rich 
phase $(\varepsilon_f,m_f)$ corresponding to the 
two minima 
of the double--well potential energy $\Psi$ in \eqref{sec015}.
Note that for $a=0.5,\,b=1,\alpha=100$ the pressure ensuring the existence 
of two phases is $p=0.24221$, while for the two phases we find 
$\varepsilon_s=-0.1436,\,m_s=-0.1436,\,\varepsilon_f=-0.1598,\,m_f=-0.0427$.

The dissipative dynamics \eqref{gd03} of the porous 
medium model, 
starting from any initial state $(\varepsilon_0,m_0)$, 
leads the system to a stationary state, which is the solution of the 
following problem:
\begin{align}
&(-k_1\varepsilon'-k_2 m')'=f_1(m,\varepsilon)&\mbox{in}\,\Omega,
\label{problemastazdipartenzaconepsilonpunto}
\\
&(-k_2\varepsilon'-k_3m')'=f_2(m,\varepsilon)&\mbox{in}\,\Omega,
\label{2problemastazdipartenzaconepsilonpunto}
\\
&\varepsilon(l_1)=\varepsilon_D,
\label{5problemastazdipartenzaconepsilonpunto}
\\
& m(l_1)=m_D,&
\label{6problemastazdipartenzaconepsilonpunto}
\\
&\frac{\partial \varepsilon}{\partial x}(l_2)=0,
\label{7problemastazdipartenzaconepsilonpunto}
\\
&\frac{\partial m}{\partial x}(l_2)=0.
\label{8problemastazdipartenzaconepsilonpunto}
\end{align}
with
\begin{eqnarray}
\begin{cases}
{\displaystyle
f_1(m,\varepsilon)
:=
-\frac{\partial\Psi}{\partial\varepsilon}(m,\varepsilon)
=
\frac{2}{3}b\alpha m^3-\alpha b^2m^2\varepsilon
                -p-\varepsilon+abm-ab^2\varepsilon
\vphantom{\bigg\{_\}}}\\
{\displaystyle
f_2(m,\varepsilon):=
-\frac{\partial\Psi}{\partial m}(m,\varepsilon)
=
-\alpha m^3+2\alpha b\varepsilon m^2-b^2\alpha \varepsilon^2 m-am
+ab\varepsilon}
\end{cases}
\label{fcap}
\end{eqnarray}
where we recall \eqref{sec015}.
Note that this system of stationary equations has the same form 
of the stationary problem corresponding to our general 
problem $(P)$ introduced in Section~\ref{Strong}.

In our case of Dirichlet--Neumann boundary conditions, the stationary Problem \eqref{problemastazdipartenzaconepsilonpunto}--\eqref{8problemastazdipartenzaconepsilonpunto} has not a unique solution. From the physical point of view, this property means that it is possible to observe different strain and fluid content stationary profiles with the same Dirichlet condition $(\varepsilon_D,m_D)$ at one end. Below we discuss some graphs representing the solution of equations \eqref{problemastazdipartenzaconepsilonpunto}--\eqref{8problemastazdipartenzaconepsilonpunto} in the interval $\Omega=[0,1]$.

\begin{figure}
\begin{picture}(200,400)(15,0)
\put(0,300)
{
\resizebox{6.5cm}{!}{\rotatebox{0}{\includegraphics{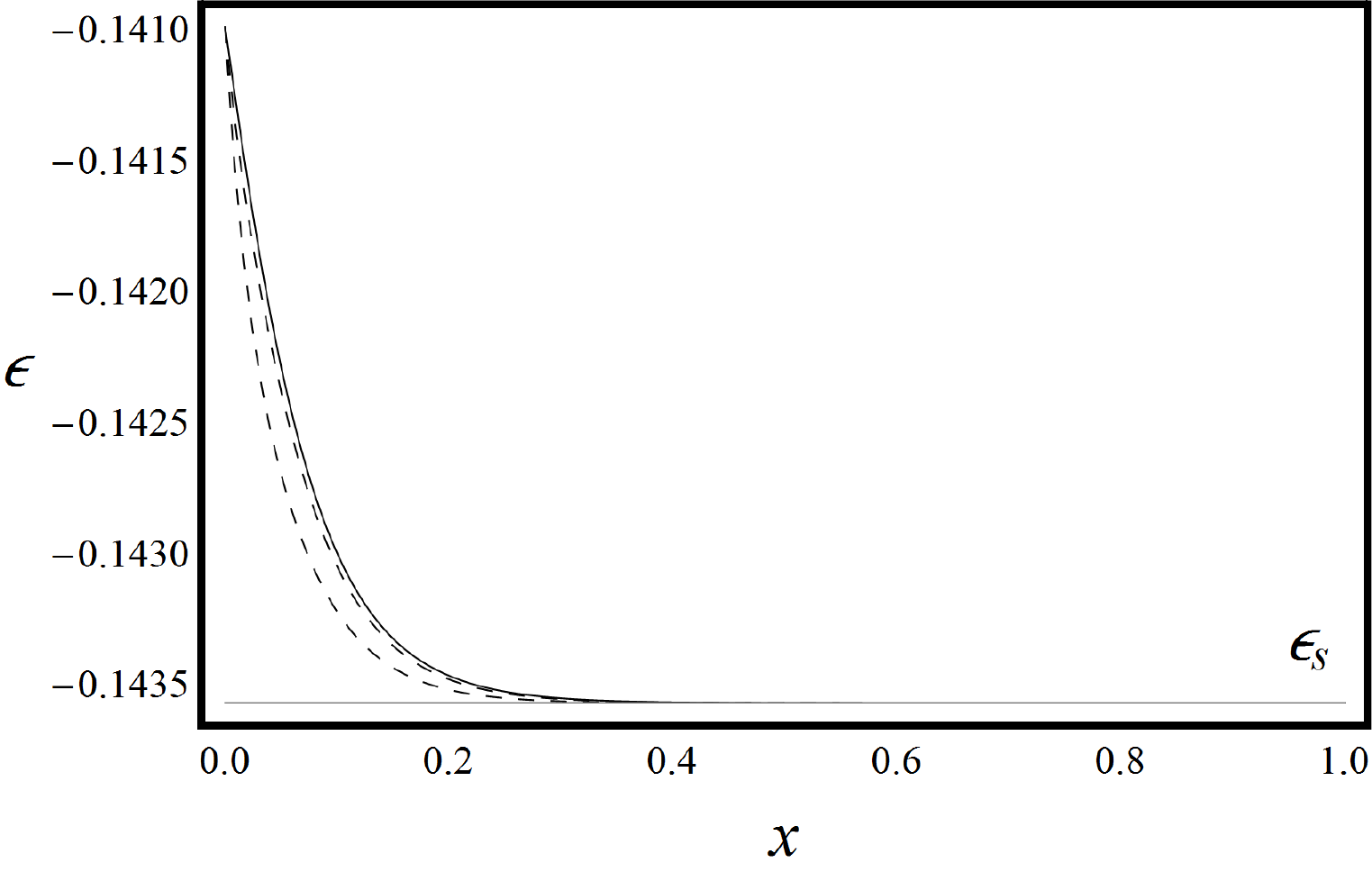}}}
}
\put(215,300)
{
\resizebox{6.5cm}{!}{\rotatebox{0}{\includegraphics{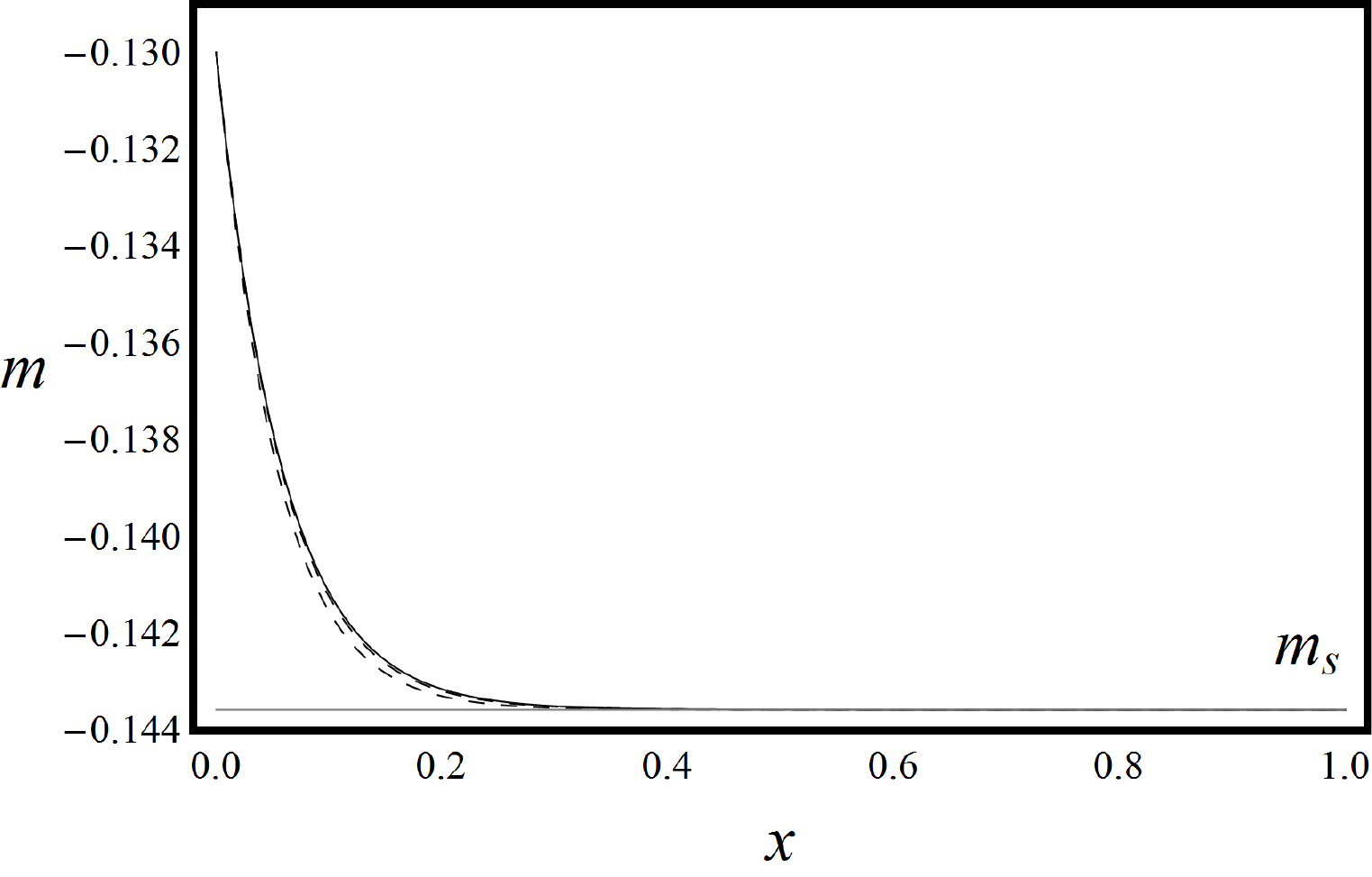}}}
}
\put(0,150)
{
\resizebox{6.5cm}{!}{\rotatebox{0}{\includegraphics{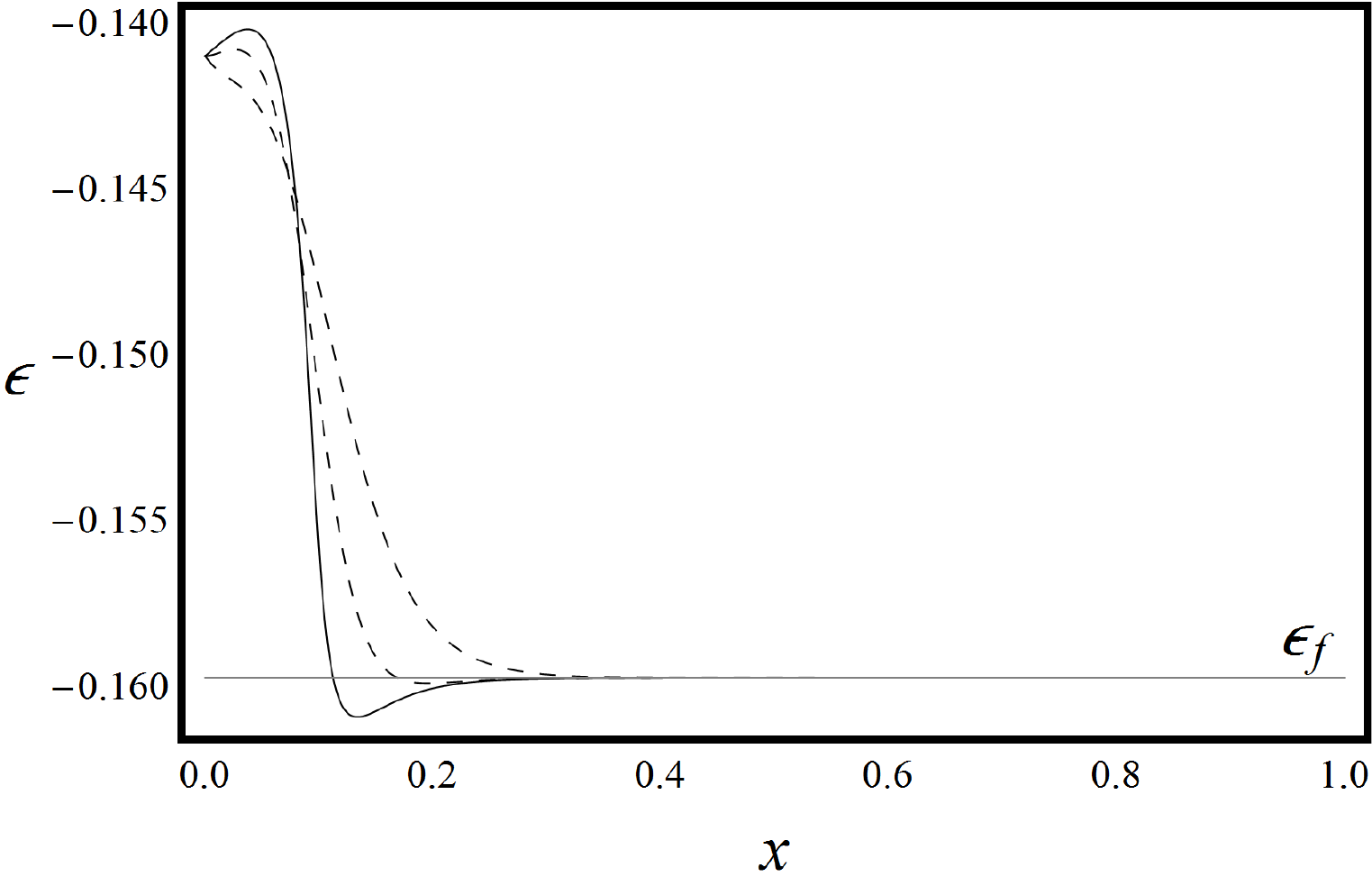}}}
}
\put(215,150)
{
\resizebox{6.5cm}{!}{\rotatebox{0}{\includegraphics{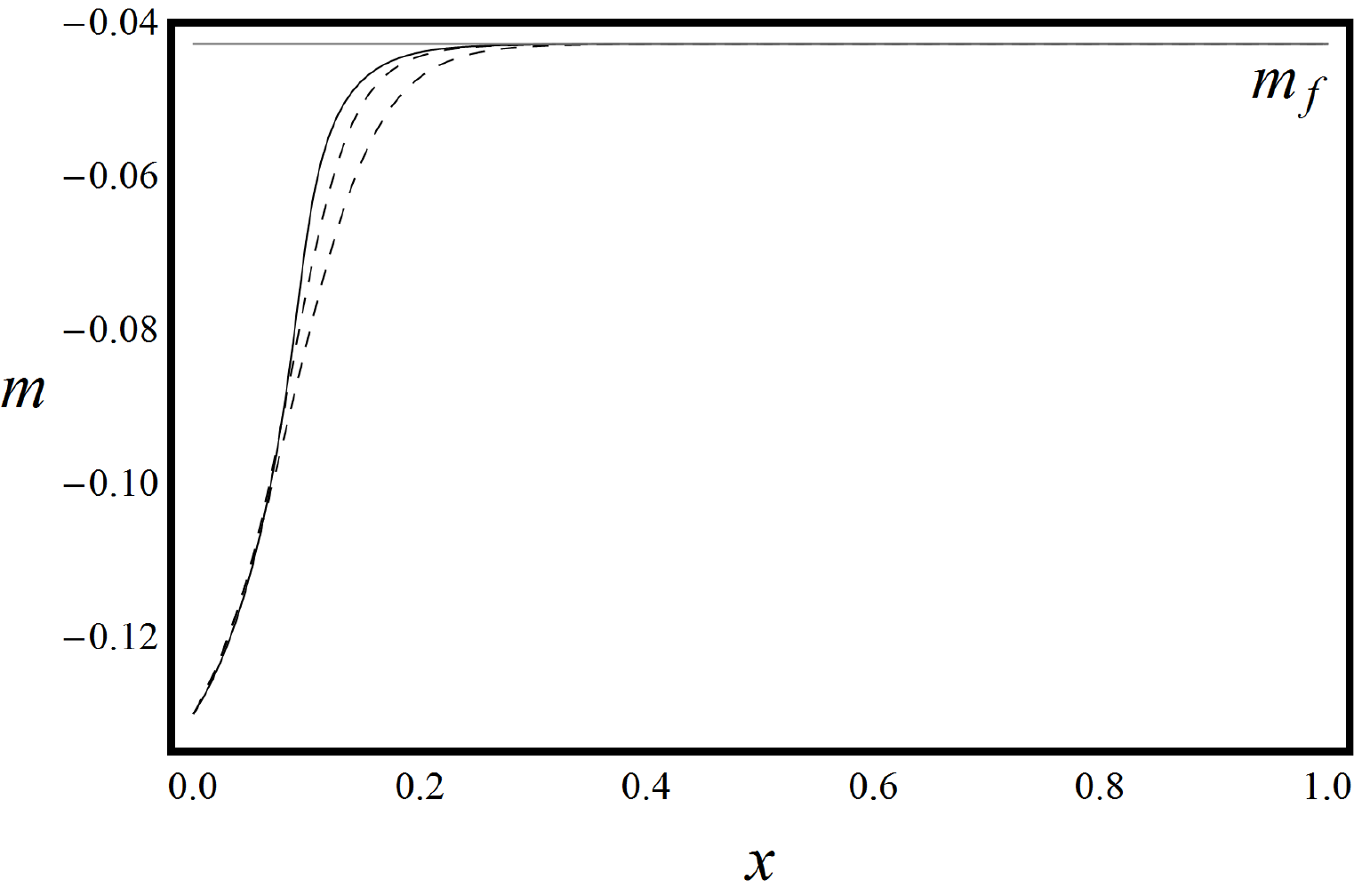}}}
}
\put(0,0)
{
\resizebox{6.5cm}{!}{\rotatebox{0}{\includegraphics{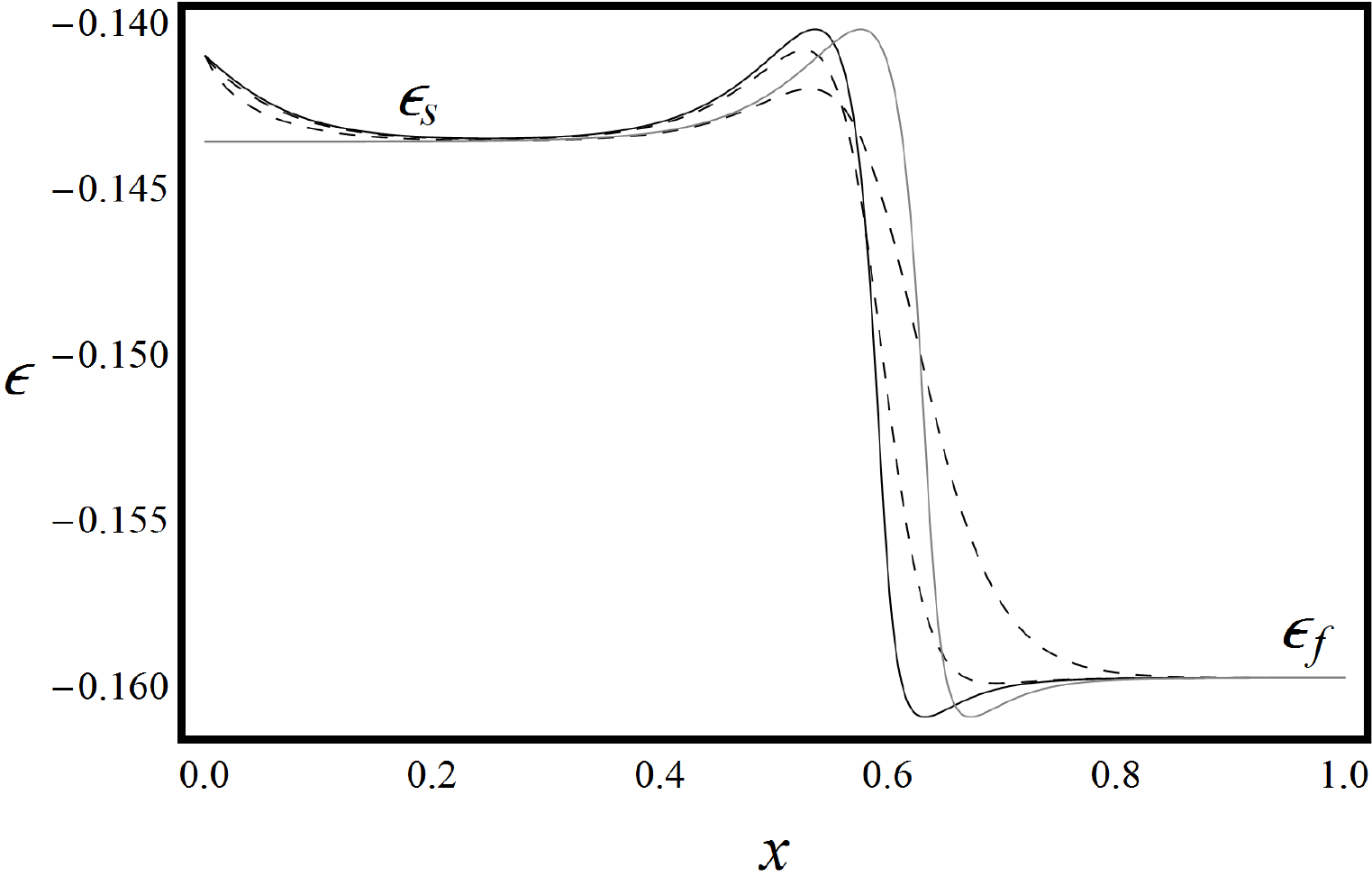}}}
}
\put(215,0)
{
\resizebox{6.5cm}{!}{\rotatebox{0}{\includegraphics{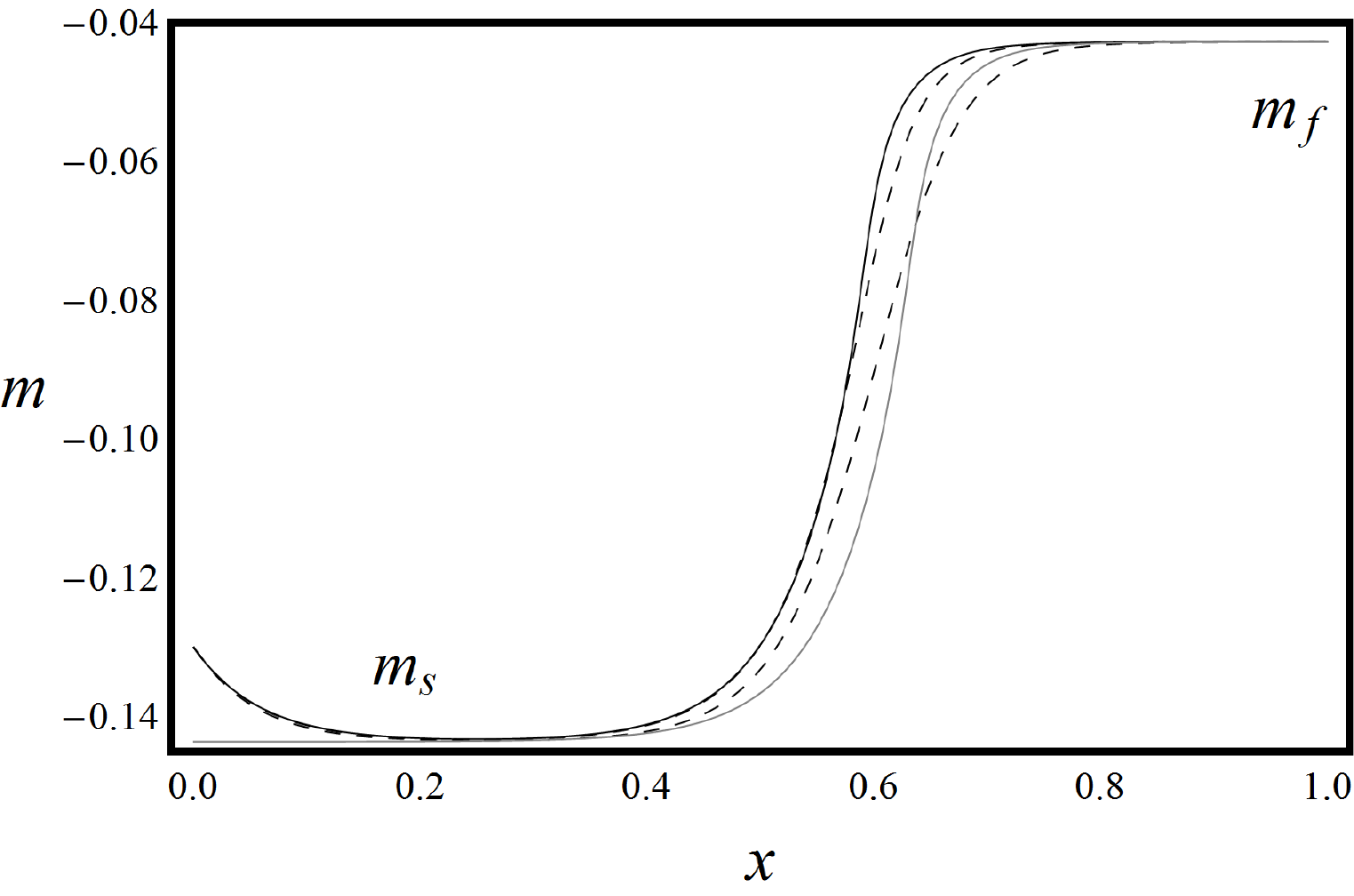}}}
}
\end{picture}
\vskip 1. cm
\centering
\caption{Solutions $\varepsilon(x)$ (left) and $m(x)$ (right) of the stationary problem \eqref{problemastazdipartenzaconepsilonpunto}--\eqref{8problemastazdipartenzaconepsilonpunto} with the boundary conditions $\varepsilon(0)=\bar{\varepsilon}=-0.141,\, m(0)=\bar{m}=-0.13,\,\partial_x \varepsilon(x=1)=0,\,and\,\partial_x m(x=1)=0$ on the finite interval $[0,1]$, for $p=0.24,\,a=0.5,\,b=1,\alpha=100,k_1=10^{-3}=10^{-3},\,k_2=10^{-3}$ (solid line), and $k_1=10^{-3}=10^{-3},\,k_2=0.2\times 10^{-3},\,0.8\times 10^{-3}$ (dotted lines), starting by the following initial guesses (gray lines): costant fluid--poor phase (top), costant fluid--rich phase (middle), Dirichlet boundary conditions fixing the two phases at the ends of the sample (bottom).}
\label{fig1}
\end{figure}
 
\begin{figure}
\begin{picture}(200,400)(15,0)
\put(0,300)
{
\resizebox{6.5cm}{!}{\rotatebox{0}{\includegraphics{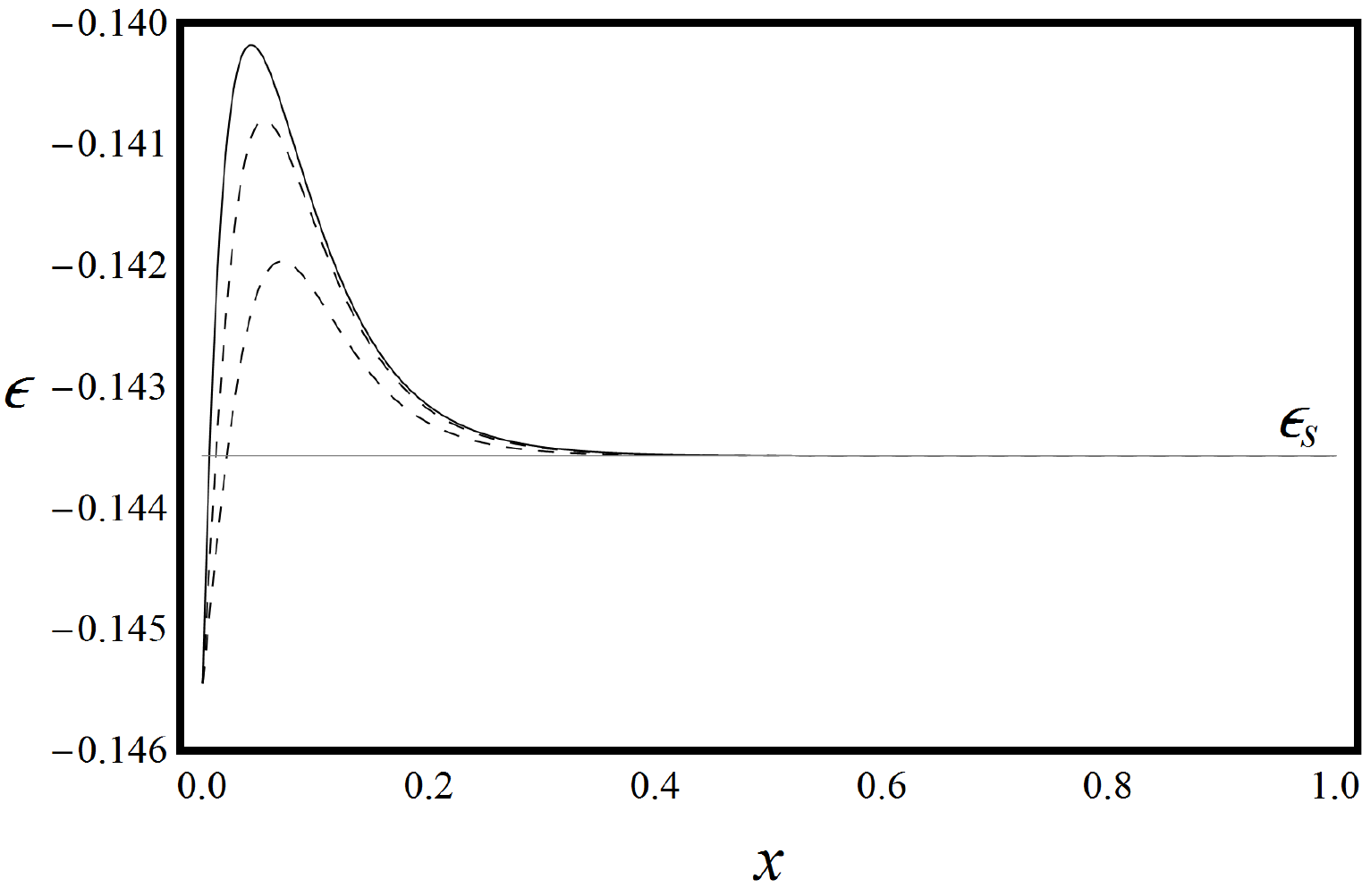}}}
}
\put(215,300)
{
\resizebox{6.5cm}{!}{\rotatebox{0}{\includegraphics{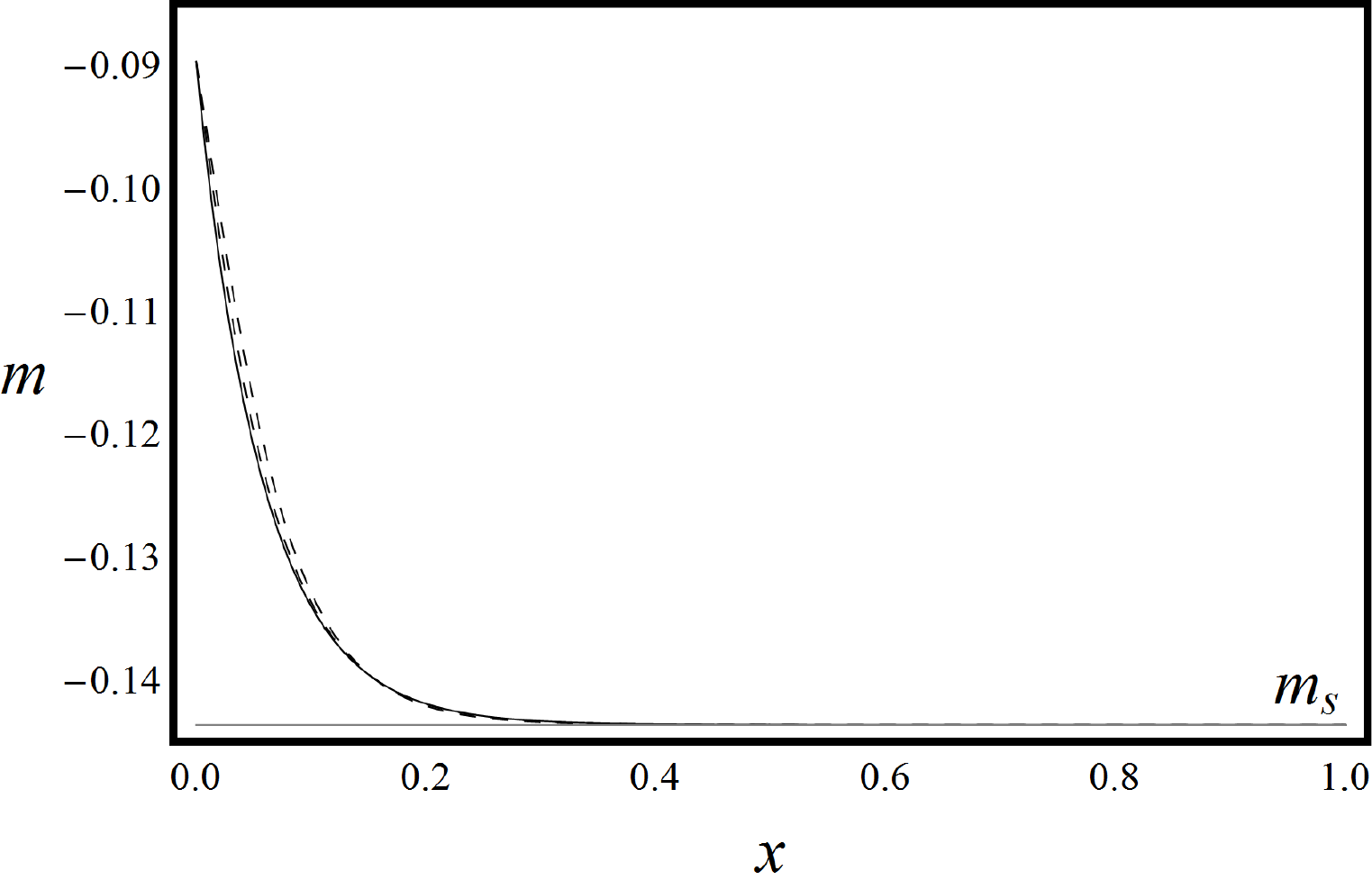}}}
}
\put(0,150)
{
\resizebox{6.5cm}{!}{\rotatebox{0}{\includegraphics{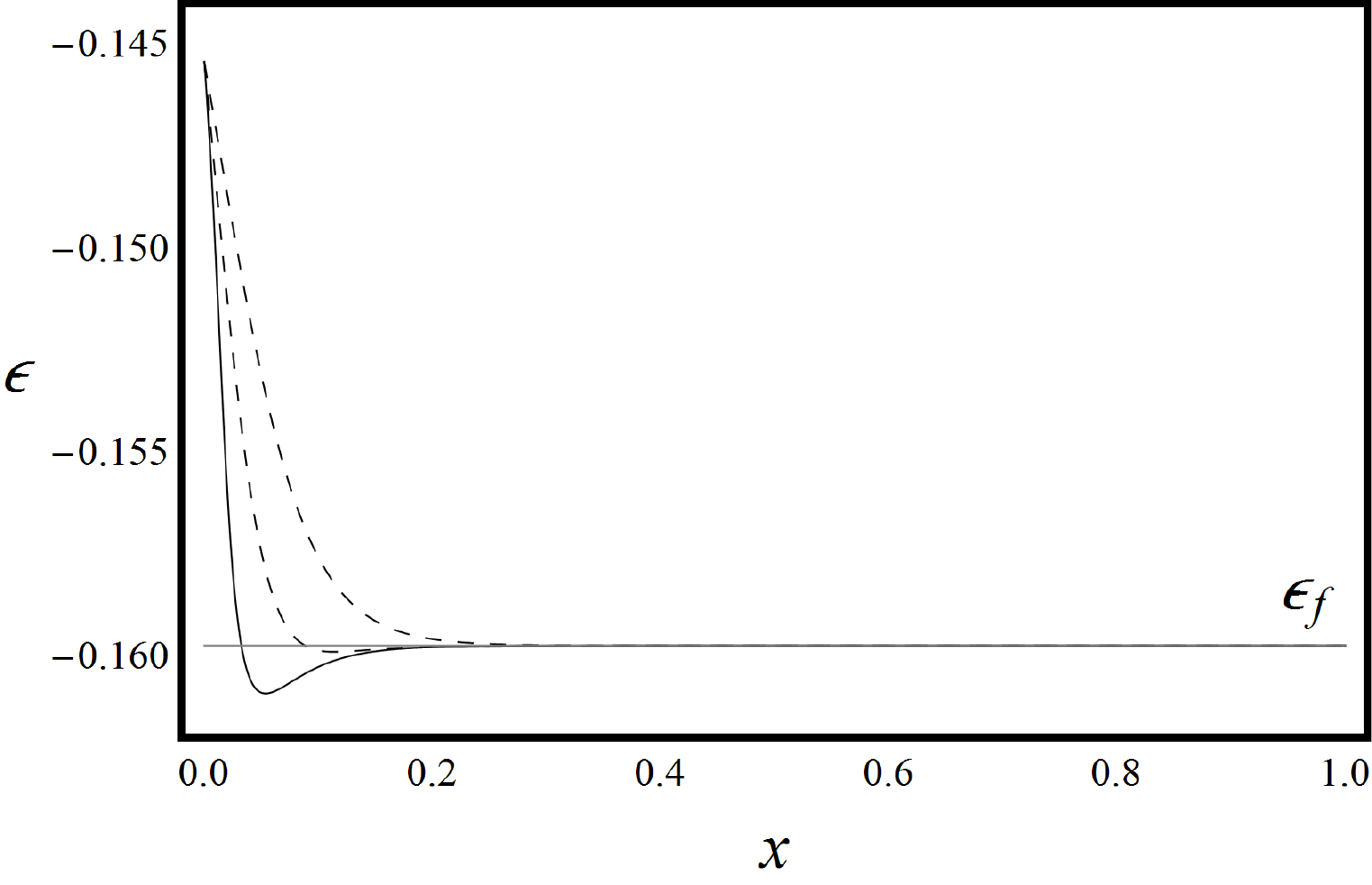}}}
}
\put(215,150)
{
\resizebox{6.5cm}{!}{\rotatebox{0}{\includegraphics{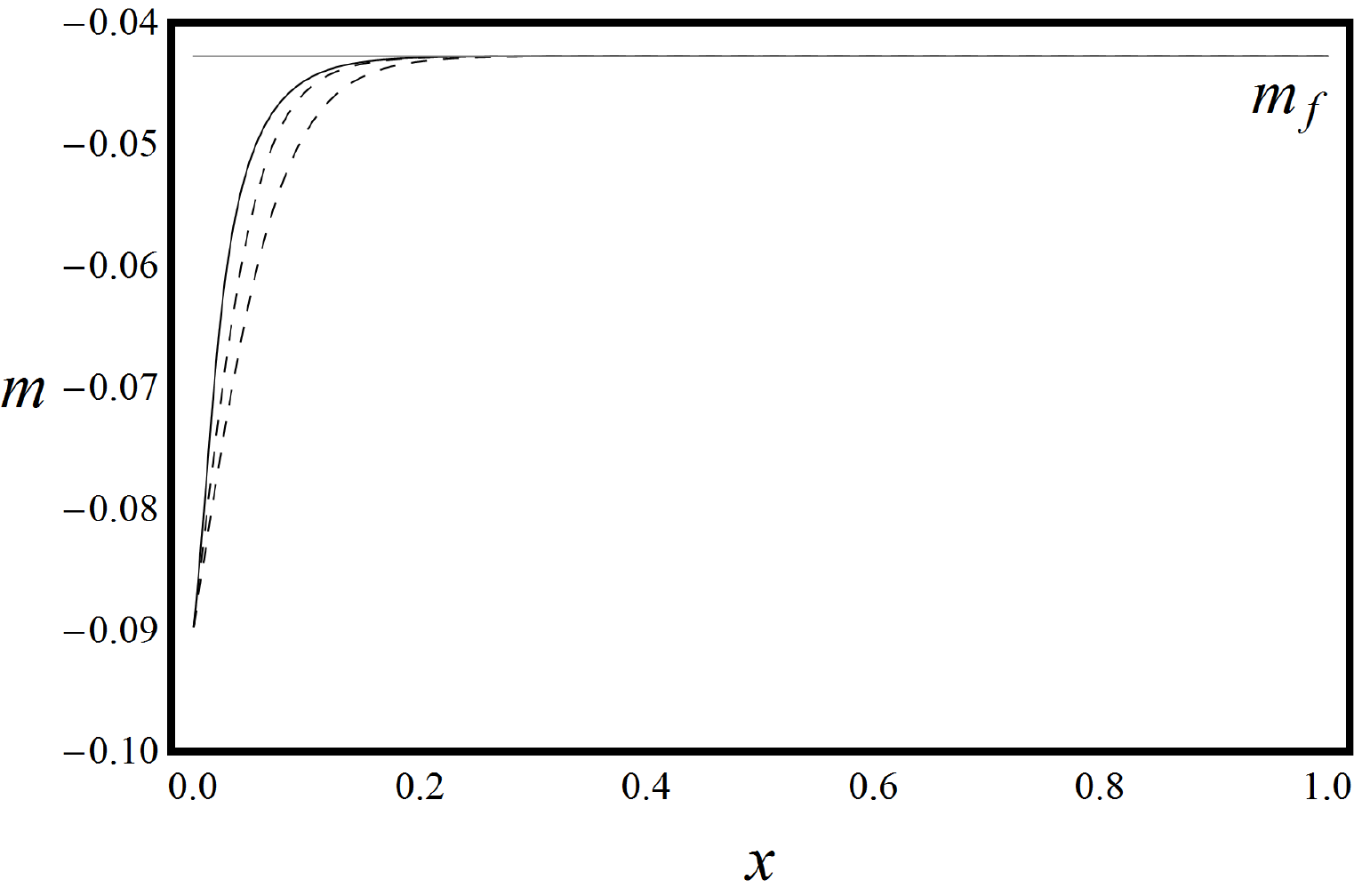}}}
}
\put(0,0)
{
\resizebox{6.5cm}{!}{\rotatebox{0}{\includegraphics{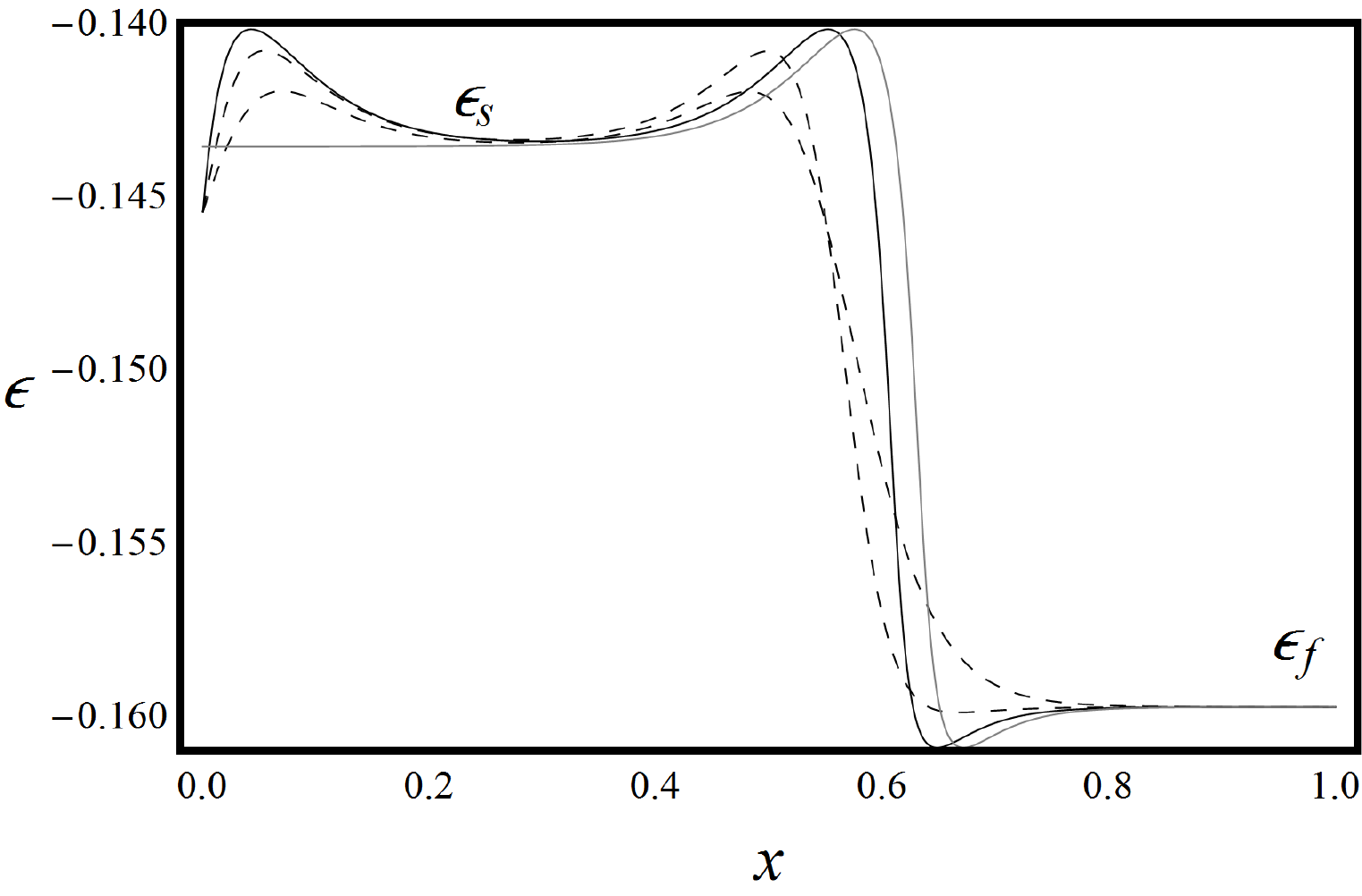}}}
}
\put(215,0)
{
\resizebox{6.5cm}{!}{\rotatebox{0}{\includegraphics{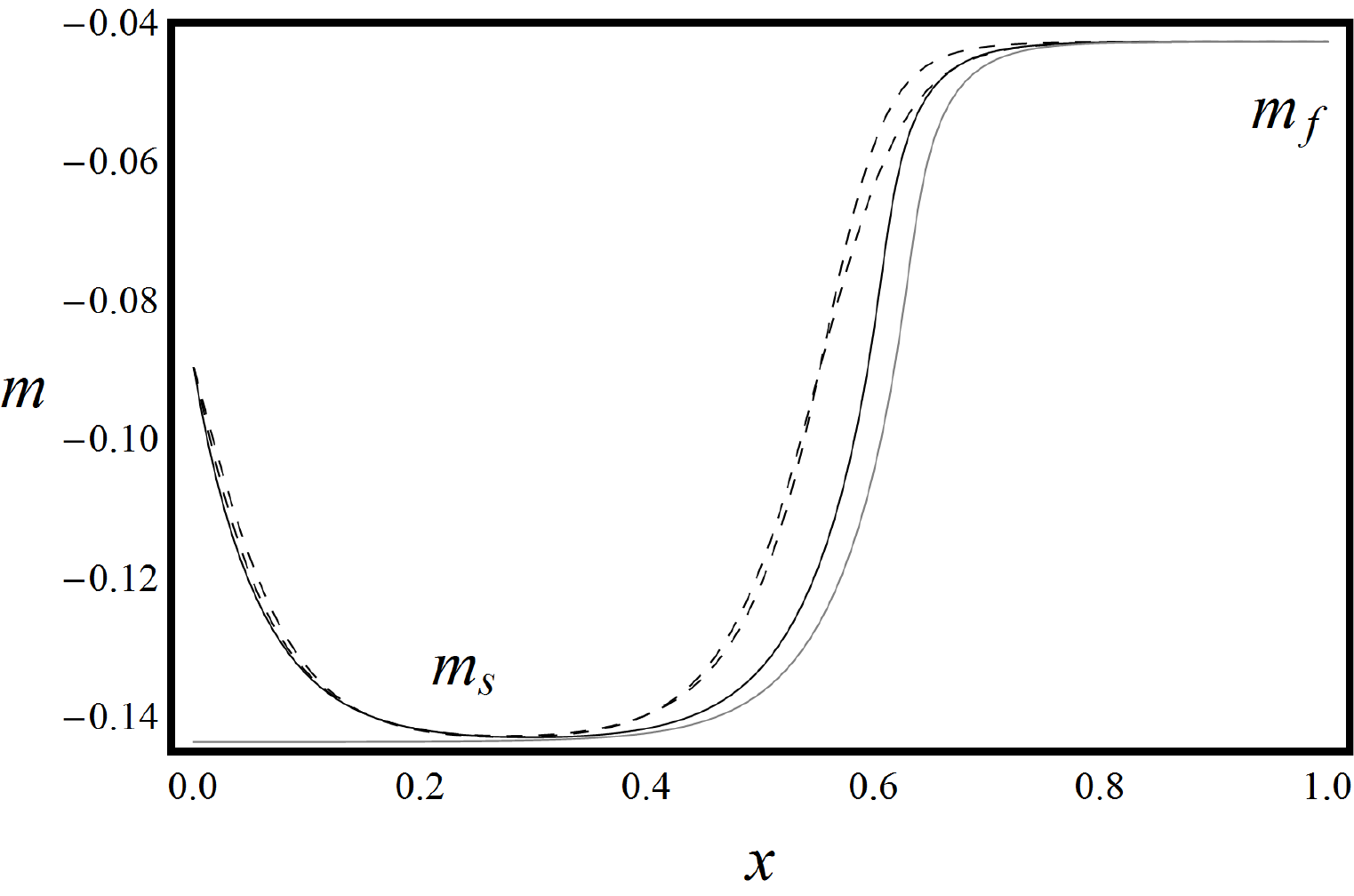}}}
}
\end{picture}
\vskip 1. cm
\centering
\caption{Solutions $\varepsilon(x)$ (left) and $m(x)$ (right) of the stationary problem \eqref{problemastazdipartenzaconepsilonpunto}--\eqref{8problemastazdipartenzaconepsilonpunto} with the boundary conditions $\varepsilon(0)=\bar{\bar{\varepsilon}}=-0.1454,\, m(0)=\bar{\bar{m}}=-0.0897,\,\partial_x \varepsilon(x=1)=0,\,and\,\partial_x m(x=1)=0$ on the finite interval $[0,1]$, for $p=0.24,\,a=0.5,\,b=1,\alpha=100,k_1=10^{-3}=10^{-3},\,k_2=10^{-3}$ (solid line), and $k_1=10^{-3}=10^{-3},\,k_2=0.2\times 10^{-3},\,0.8\times 10^{-3}$ (dotted lines), starting by the following initial guesses (gray lines): costant fluid--poor phase (top), costant fluid--rich phase (middle), Dirichlet boundary conditions fixing the two phases at the ends of the sample (bottom).}
\label{fig2}
\end{figure}

In figures \ref{fig1}--\ref{fig2}, the black solid lines correspond to the case $k_1=k_2=k_3=10^{-3}$, while the dashed lines correspond to the case $k_1=k_3=10^{-3}$ and $k_2=0.2\times 10^{-3},0.8\times 10^{-3}$. We recall that only in the second case the uniqueness of the solution to the time--dependent problem $(P)$ is ensured (see Theorem \ref{uniqueness}). We comment the physical features of the solution reffering to the fluid density profile $m$ (bottom of the figures). The charachteristics of the strain profile can be discussed accordingly.
\\
The three different graphs for $\varepsilon$ and $m$ in figure \ref{fig1} correspond to three different stationary solutions to problem \eqref{problemastazdipartenzaconepsilonpunto}--\eqref{8problemastazdipartenzaconepsilonpunto}, for the same Dirichlet boundary value $\varepsilon_D=\bar{\varepsilon},\,m_D=\bar{m}$ in $l_1=0$, where $\bar{\varepsilon},\bar{m}$ are values close to the fluid--poor phase $(\varepsilon_s,m_s)$ but slightly larger. 
In the top row the system is almost completely in the fluid--poor--phase, indeed in the interval $[0,0.2]$ the profile quickly decay from $\bar{m}$ to $m_s$ and then it stays constant. Note that no interface between the fluid--poor and the fluid--rich phase. In the central row, after a quick transition from $\bar{m}$ to $m_f$ the system constantly stays in the fluid--rich phase. Even in this case no interface is seen. Finally in the bottom row the stationary profile is an interface between the two phases $m_s$ and $m_f$. Indeed, the profile started at $\bar{m}$ first drops to $m_s$ and at a certain point quickly increases up to $m_f$.
\\
The three different graphs for $\varepsilon$ and $m$ in figure \ref{fig2} correspond to three different stationary solutions to problem \eqref{problemastazdipartenzaconepsilonpunto}--\eqref{8problemastazdipartenzaconepsilonpunto}, for the same Dirichlet boundary value $\varepsilon_D=\bar{\bar{\varepsilon}},\,m_D=\bar{\bar{m}}$ in $l_1=0$, where $\bar{\bar{\varepsilon}},\bar{\bar{m}}$ is the saddle point of the energy function $\Phi$. The essential features of the profiles are similar to those of figure \ref{fig1} but the choice of the value $\bar{\bar{m}}$ gives rise to small differences in the shape of the solutions.

The solutions of the stationary problem \eqref{problemastazdipartenzaconepsilonpunto}--\eqref{8problemastazdipartenzaconepsilonpunto} are obtained numerically via the finite difference method powered with the Newton--Raphson algorithm. The use of different initial guess in the Newton--Raphson algorithm has allowed us to find numerically the different stationary solutions. In particular, for both figures \ref{fig1}--\ref{fig2} in the top row we used as initial guess a costant function equal to the fluid--poor--phase, while in the central row it has been used a costant function equal to the fluid--rich--phase. Finally, in the bottom row we used as intial guess the solution to the same stationary problem with Dirichlet boundary conditions fixing the two phases at the ends of the sample, namely, $(\varepsilon(0),m(0))=(\varepsilon_s,m_s)$ and $(\varepsilon(1),m(1))=(\varepsilon_f,m_f)$ (see \cite{CIS2010}).
\\
\indent We now describe the adopted finite difference substitution rules. Let $n$ be a positive integer number and let $\sigma=1/n$ be the space increment. We subdivide the space interval $[0,1]$ into $n$ small intervals of lenght $\sigma$. Given a field $h(x)$, for any $i\in\{1,\dots,n-1\}$, we set
\begin{displaymath}
h'(i\sigma)\approx \frac{1}{2\sigma}[h((i+1)\sigma)
                    -h((i-1)\sigma)]
\end{displaymath}
For the second space derivative we set 
\begin{displaymath}
h''(i\sigma,)
\approx
\frac{1}{\sigma^2}[h((i+1)\sigma)-2h(i\sigma)
                    +h((i-1)\sigma)]
\end{displaymath}
for $i\in\{1,\dots,n-1\}$.

\section{Conclusions}
\label{s:conc}
We have studied the existence and the uniqueness of weak solutions  to 
the problem $(P)$ introduced in Section~\ref{Strong}. 
We have stressed that the mathematical interest 
of this problem lies on the coupled cross-diffusion-like structure of the transport fluxes. 
The problem has, also, a remarkable physical application 
in the framework of the Porous Media theory (see the discussion 
Section~\ref{s:cons}).

It is worth noting that 
our mathematical approach is restricted to the one--space dimension case (as far as we are 
concerned with the passage to the limit $\delta\to 0$) and cannot be 
extended for higher space dimensions in a natural way. To make progress in this direction 
we hope to be able to employ the hidden variational structure of the 
problem \cite[Section~2.4]{CIS2013}, see also \eqref{vardiss}.  
Regardless the choice of space dimension, we find mathematically interesting the study 
of 
the $t\to\infty$ asymptotics in the case when multiple steady states are 
expected. Similar considerations can be made in the Cahn--Hilliard setup. 

\vskip 0.5 cm
\par\noindent
\textbf{Acknowledgments}
\par\noindent
The authors thank G. Sciarra (Rome) for fruitful discussions on the consolidation problem. AM acknowledges useful discussions with T. Aiki (Tokyo) on related matters.
 


\end{document}